\documentclass{article}

\usepackage[english]{babel}

\usepackage[letterpaper,top=2cm,bottom=2cm,left=3cm,right=3cm,marginparwidth=1.75cm]{geometry}
\RequirePackage[dvips]{graphicx}
\usepackage{xcolor,float}
\usepackage{amsmath}
\usepackage{cite}
\usepackage{graphicx}
\usepackage[colorlinks=true, allcolors=blue]{hyperref}
\usepackage{amsmath}
\usepackage{graphicx}
\usepackage{amscd,amsfonts,amsmath,amssymb,amstext,eufrak,yfonts}
\usepackage{amstext,amsthm,mathbbol,mathrsfs,stmaryrd}
\usepackage[cmtip,arrow]{xy}
\usepackage{pb-diagram,pb-xy}
\usepackage{curves,graphicx,texdraw}
\usepackage{tikz}
\usepackage[colorlinks=true, allcolors=blue]{hyperref}
\usepackage{amscd,amsfonts,amsmath,amssymb,amstext,eufrak,yfonts}
\usepackage{amstext,amsthm,mathbbol,mathrsfs,stmaryrd}
\theoremstyle{definition}

\theoremstyle{plain}
\newtheorem{theorem}{\sc Theorem}[section]
\newtheorem{proposition}[theorem]{\sc Proposition}
\newtheorem{claim}{Claim}
\newtheorem{lemma}[theorem]{\sc Lemma}
\newtheorem{corollary}[theorem]{\sc Corollary}

\newtheorem{conjecture}[theorem]{\sc Conjecture}

\title{Structural Properties and Applications of the Augmented Sombor Index}
\author{Chunlei Xu$^{1,2}$, Kinkar Chandra Das$^{2,}$\footnote{Corresponding author.}, Jayanta Bera$^{2}$
}
\date{}

\begin{document}
\maketitle
\linespread{1.5}\selectfont
\begin{center}
   $^1$College of Mathematics Science, Inner Mongolia Minzu University, Tongliao 028000, China.

    $^{2}$Department of Mathematics, Sungkyunkwan
University, Suwon 16419, Republic of Korea.

E-mail: xuchunlei1981@sina.cn,~kinkardas2003@gmail.com,~jayantabera@g.skku.edu
\end{center}

\begin{abstract}

Topological indices are key quantitative descriptors in mathematical chemistry, unchanged under symmetry operations and retaining graph connectivity; they capture molecular structural features to provide insights into molecular stability and chemical properties, becoming indispensable in cheminformatics and theoretical chemistry. Among degree-based indices, the \textbf{Sombor index} is widely concerned for capturing structural information, and motivated by enhanced structural discrimination, the \textbf{augmented Sombor index} ($ASO$) is defined for a connected graph $\Omega$ with $|V(\Omega)|\geq 3$ as $$ASO(\Omega) = \sum_{v_iv_j\in E(\Omega)} \sqrt{\frac{{d_i^2 + d_j^2}}{d_i + d_j - 2}},$$ where $d_i$ and $d_j$ are the degrees of vertices $v_i$ and $v_j$, respectively. Within the scope of this study, we first establish several sharp bounds for the augmented Sombor index and characterize the extremal graphs attaining these bounds. In particular, we determine the minimum value of the $ASO$ index for unicyclic graphs with a prescribed girth and characterize all graphs achieving this minimum. We also identify the second maximum $ASO$ value among trees and characterize the corresponding extremal tree structures. Furthermore, the minimum and maximum values of the $ASO$ index for bipartite graphs and chemical graphs are obtained, together with a complete characterization of the associated extremal graphs. In addition, we characterize the chemical trees that maximize the $ASO$ index. The chemical applicability of the $ASO$ index is investigated through quantitative structure–property relationship (QSPR) analysis, supported by a comparative assessment of several variants of the Sombor index. Finally, we present concluding remarks and outline potential directions for future research on the augmented Sombor index of graphs.

\bigskip

\noindent
{\bf Keywords}: Augmented Sombor index, Uncicyclic graph, Tree, Chemical Graph, QSPR analysis.\\[1mm]
\noindent
 {\bf MSC}:  05C90, 05C07, 05C35.

\end{abstract}

\section{Introduction}
Mathematical chemistry is an interdisciplinary field that integrates mathematical methods with chemical principles to gain a deeper understanding of molecular structures, properties, and reactions. By representing molecules as graph structures, with vertices representing atoms and edges corresponding to chemical bonds, researchers can leverage graph-theoretical techniques to unravel the underlying properties of molecules. This approach enables the analysis of chemical behavior in a highly structured and computationally efficient manner.

A key research emphasis in mathematical chemistry lies in leveraging topological indices, which function as numerical descriptors extracted from molecular graphs. These indices encapsulate the connectivity patterns of atoms within a molecule and serve as quantitative tools for predicting various physicochemical properties. A typical topological index $TI(\Omega)$ is expressed as:
\begin{align}
    TI(\Omega)=\sum_{v_iv_j\in E(\Omega)}F(d_i,d_j)=\sum_{1\leq i\leq j\leq n-1}x_{ij}F(i,j),\nonumber
\end{align}
where
$\Omega=(V(\Omega),E(\Omega))$ is a simple graph representing a molecule, $d_i$ stand for the degree of vertex $v_i$, and $F$ constitutes a symmetric binary function dependent on the degrees of vertices $v_i$ and $v_j$ ($v_iv_j\in E(\Omega)$), $x_{ij}$ is the number of edge between $i$-degree vertex and $j$-degree vertex. These topological indices exhibit significant power owing to their invariance under symmetry operations that retain a graph’s connectivity structure, thereby rendering them dependable tools for investigating structure–property relationships.

Among the diverse range of topological invariants, degree-based indices—those that rely exclusively on vertex degrees—have proven to be particularly valuable due to their simplicity and computational efficiency. These indices often show strong correlations with various molecular properties, including boiling points, surface tension, and molar refraction, which makes them essential in the domain of Quantitative Structure–Property Relationships (QSPR). Through QSPR models, researchers can predict molecular properties based solely on structure, thereby reducing the need for costly experimental measurements. Among several types of indices investigated within this
 field, degree-based topological indices play a prominent role. Relevant examples can be found in \cite{ABR,AADA1,ARB,BDFG,D,G,XuHorolBuyan,YLG,MVT,CAM1,CAM6,lu22}.

Among degree-based topological indices, Sombor index (proposed in \cite{Gutman}) is a notable one, and it is defined for a graph $\Omega$ as
\begin{align}
    SO=SO(\Omega)=\sum_{v_iv_j\in E(\Omega
    )}\sqrt{d_i^2+d_j^2}.\nonumber
\end{align}
This topological index has attracted considerable attention due to its strong theoretical foundations and practical utility in molecular modeling. Numerous studies, covering both theoretical analyses and experimental investigations, have explored the $SO$ index, with experimental results highlighting solid relationships between the index and the measured physicochemical properties of chemical compounds \cite{Sultan33,SRKK,Das1,das22trees,HX,JSJ,LGYH,LSZRL,MEB,MM,MB,O,R,SC,XL,ZNKIA,nithyaa24,li22,gao24}.
 Other indices related to the $SO$ index include the elliptic Sombor index and the Euler Sombor index, whose definitions are as follows:
\begin{align}
    ESO=ESO(\Omega)=\sum_{v_iv_j\in E(\Omega)}(d_i+d_j)\sqrt{d_i^2+d_j^2},\nonumber
\end{align}
and
\begin{align}
   EU=EU(\Omega)=\sum_{v_iv_j\in E(\Omega)}\sqrt{d_i^2+d_j^2+d_id_j},\nonumber
\end{align}
respectively. Relevant findings concerning the elliptic Sombor index and the Euler Sombor index are available in \cite{ADF,KCDJJ,gutman24elliptic, M,raza25eulersombor,SSEL,ZTHAZ}.

Building upon $SO$, a novel degree-based topological descriptor named the augmented Sombor index ($ASO$) was introduced in \cite{DGA}. For a graph $\Omega$ with at least three vertices and no component isomorphic to $P_2$ (a path with two vertices), the $ASO$ index is defined as:
\begin{align}
    ASO=ASO(\Omega)&=\sum_{v_iv_j\in E(\Omega)}\,\sqrt{\frac{d_i^2+d_j^2}{d_i+d_j-2}}=\sum_{v_iv_j\in E(\Omega)}\,h(d_i,d_j),\nonumber
\end{align}
where
\begin{align}
    h(d_i,d_j)=\sqrt{\frac{d_i^2+d_j^2}{d_i+d_j-2}}.\nonumber
\end{align}
Very recently, the mathematical properties and chemical applications of the augmented Sombor index have been investigated in \cite{DASALI1,DGA}. This paper extends this line of research by studying the augmented Sombor index from both theoretical and chemical perspectives. From the theoretical viewpoint, we establish several sharp bounds for the augmented Sombor index over various graph classes and completely characterize the extremal graphs attaining these bounds. From the chemical perspective, we examine the applicability of the augmented Sombor index.
Throughout the paper, we use standard graph-theoretic notation. In particular, $P_n$, $C_n$, $S_n$, and $K_{p,q}$ $(p+q=n)$ denote the path, cycle, star, and complete bipartite graph of order $n$, respectively. Furthermore, $DS(p,q)$ denotes the double star of order $n=p+q+2$, where $p\ge q\ge 1$. Moreover, $\Omega - v_iv_j$ denotes the graph obtained from $\Omega$ by deleting the edge $v_iv_j$.

The remainder of the paper is organized as follows. In Section~\ref{section2}, we study extremal problems for the augmented Sombor index and identify the graphs attaining the corresponding bounds, including unicyclic graphs with fixed girth, trees, bipartite graphs, chemical graphs, and chemical trees. For each class, the extremal values of the $ASO$ index are determined and the associated extremal structures are described in detail. In Section~\ref{section4}, we investigate the chemical applicability of the augmented Sombor index via QSPR analysis and present a comparative study with several related Sombor-type indices. Finally, concluding remarks and possible directions for future research are given in Section ~\ref{section5}.

 \section{Bounds of the Augmented Sombor Index of graphs}\label{section2}
 In this section, we first present several lemmas, then establish the bounds for $ASO$, and proceed to characterize the graphs that attain these bounds.
\begin{lemma}\emph{\cite{DGA}}\label{lemma2.1}
     Let $\Omega$ be a graph of order $n\geq 5$. For any pendant edge
$v_iv_j\in E(\Omega)(d_i>d_j=1)$,
\begin{align}
\sqrt{5}\leq h(d_i,d_j)\leq \sqrt{\frac{1+(n-1)^2}{n-2}},\nonumber
 \end{align}
where the left equality holds if and only if $(d_i,d_j)=(2,1)$ or $(d_i,d_j)=(3,1)$, and the right equality holds if and only if $(d_i,d_j)=(n-1,1)$.
\end{lemma}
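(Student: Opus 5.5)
Since $d_j=1$, the quantity in Lemma~\ref{lemma2.1} is a one-variable function. With $x=d_i$ it reads
\[
h(x,1)=\sqrt{\frac{x^2+1}{x-1}},
\]
and because $d_i>d_j=1$ and $d_i\le n-1$, we only need it on the integers $2\le x\le n-1$. Since the square root is increasing, it suffices to study
\[
g(x)=\frac{x^2+1}{x-1}=x+1+\frac{2}{x-1}
\]
on the real interval $[2,n-1]$ and then restrict to integers.

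The plan runs in three steps.

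First, compute the derivative: $g'(x)=1-\frac{2}{(x-1)^2}$. It vanishes only at $x=1+\sqrt2\approx 2.414$. Hence $g$ is strictly decreasing on $[2,1+\sqrt2]$ and strictly increasing on $[1+\sqrt2,\infty)$; equivalently, $g$ is strictly convex for $x>1$.

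Second, find the minimum over integers. It must occur at $x=2$ or $x=3$, the integers adjacent to the critical point. Direct evaluation gives $g(2)=5$ and $g(3)=10/2=5$. Every integer $x\ge 4$ satisfies $g(x)>g(3)=5$ by strict monotonicity; for instance $g(4)=17/3$. This gives $h\ge\sqrt5$, with equality exactly when $d_i\in\{2,3\}$. The hypothesis $n\ge5$ guarantees $n-1\ge4$, so both $2$ and $3$ are admissible degrees and the bound is attained.

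Third, find the maximum. By strict convexity, the maximum over $[2,n-1]$ is attained only at an endpoint, so we compare $g(2)=5$ with $g(n-1)=\frac{(n-1)^2+1}{n-2}$. The inequality $g(n-1)>5$ is equivalent to $n^2-7n+12>0$, that is, $(n-3)(n-4)>0$, which holds for $n\ge5$. So the maximum is attained uniquely at $d_i=n-1$, giving the upper bound $\sqrt{\frac{1+(n-1)^2}{n-2}}$ with equality if and only if $(d_i,d_j)=(n-1,1)$.

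There is no real obstacle here. The only delicate points are checking that $2$ and $3$ tie exactly at the minimum, and checking that the endpoint comparison for the maximum is strict; the latter is precisely where $n\ge5$ is needed, since for $n=4$ we would have $g(3)=g(2)$.
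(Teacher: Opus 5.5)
Your proof is correct. The rewriting $\frac{x^2+1}{x-1}=x+1+\frac{2}{x-1}$, the tie $g(2)=g(3)=5$, and the endpoint comparison $(n-3)(n-4)>0$ together settle both bounds and both equality cases. One small point: saying that the sign pattern of $g'$ is ``equivalently'' strict convexity is loose. Convexity does hold, since $g''(x)=\frac{4}{(x-1)^3}>0$, and in any case the decrease-then-increase shape alone already forces the maximum to an endpoint.

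The paper gives no proof of this lemma; it quotes it from \cite{DGA}. Your one-variable analysis is the same as the one the paper uses for the analogous pendant-edge bound in Lemma~\ref{kh1}(2), so there is nothing to add.
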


 \begin{lemma}\emph{\cite{DGA}}\label{lemma2.2}
Let $\Omega$ be a graph of order $n\geq 4$. For any non-pendant edge $v_iv_j\in E(\Omega)$,
\begin{align}
    2\leq h(d_i,d_j)\leq \frac{n-1}{\sqrt{n-2}},\nonumber
\end{align}
where the left equality holds if and only if $(d_i,d_j)=(2,2)$, and the right
 equality holds if and only if $(d_i,d_j)=(n-1,n-1)$.
 \end{lemma}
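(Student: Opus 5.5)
The plan is to reduce Lemma~\ref{lemma2.2} to a monotonicity statement about the two-variable function $h$ on the admissible degree range. A non-pendant edge $v_iv_j$ has $2\le d_i,d_j\le n-1$. Set
\[
g(x,y)=\frac{x^2+y^2}{x+y-2}, \qquad h=\sqrt{g},
\]
so it suffices to show that $g$ is strictly increasing in each variable on the region $x,y\ge 2$. Granting this, $g(d_i,d_j)\ge g(2,2)=8/2=4$, which gives $h\ge 2$. Likewise $g(d_i,d_j)\le g(n-1,n-1)=2(n-1)^2/(2n-4)=(n-1)^2/(n-2)$, which gives $h\le (n-1)/\sqrt{n-2}$. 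Strict monotonicity in each coordinate also yields the equality characterizations: equality in the lower bound forces $d_i=d_j=2$, and equality in the upper bound forces $d_i=d_j=n-1$.

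For the monotonicity, differentiate in $x$ (the case of $y$ is symmetric):
\[
\frac{\partial g}{\partial x}=\frac{2x(x+y-2)-(x^2+y^2)}{(x+y-2)^2}=\frac{x^2+2xy-4x-y^2}{(x+y-2)^2}.
\]
The key step is showing that the numerator $N(x,y)=x^2+2xy-4x-y^2$ is positive for $x,y\ge 2$. This is the main obstacle, because positivity genuinely fails in general: for $x=2$ and $y$ large, $N=4y-4-y^2=-(y-2)^2\le 0$. So $g$ is not monotone in $x$ on the whole square, and I will not rely on global monotonicity.

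Instead, I will prove the two bounds directly by elementary inequalities.

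\emph{Lower bound.} We need $x^2+y^2\ge 4(x+y-2)$, which is equivalent to $(x-2)^2+(y-2)^2\ge 0$. This holds always, with equality iff $x=y=2$.

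\emph{Upper bound.} Write $m=n-1$ and assume $m\ge 3$, which is where the hypothesis $n\ge 4$ enters. We need to show
\[
(m-1)(x^2+y^2)\le m^2(x+y-2) \quad \text{for } 2\le x,y\le m.
\]
The left side minus the right side is convex in each variable separately. Hence over the box $[2,m]^2$ its maximum is attained at a corner, and it suffices to check the four corners:
\begin{itemize}
\item At $(m,m)$ the difference is $2m^2(m-1)-m^2(2m-2)=0$.
\item At $(2,2)$ it is $8(m-1)-2m^2<0$ for $m\ge 3$.
\item At $(2,m)$ it is $(m-1)(4+m^2)-m^3=-m^2+4m-4=-(m-2)^2<0$ for $m\ge 3$.
\end{itemize}
So the difference is $\le 0$ throughout the box. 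For the equality case, convexity shows that any interior or edge point attaining $0$ would force the corner values along that line to be $0$; since only the corner $(m,m)$ gives $0$, equality holds only at $(m,m)$, that is, $d_i=d_j=n-1$. The edge points can be verified through the one-variable convexity along each side.
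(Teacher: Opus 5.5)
Your proof is correct. There is no in-paper proof to compare it with: the paper quotes this lemma from \cite{DGA} without proving it. Your lower bound, $(d_i-2)^2+(d_j-2)^2\ge 0$, is the same completing-the-square device the paper uses in Lemma~\ref{kcd1}, where it appears as $(d_i-3)^2+(d_j-3)^2\ge 0$. Your upper bound, a corner check of $D(x,y)=(m-1)(x^2+y^2)-m^2(x+y-2)$ on $[2,m]^2$, is also valid. The corner values $0$, $-2(m-2)^2$ and $-(m-2)^2$ are right, and the fourth corner $(m,2)$, which you did not list, equals $(2,m)$ by symmetry.

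Three points for a clean write-up. First, delete the opening paragraph that begins ``it suffices to show that $g$ is strictly increasing''. As you observed, that claim is false: $g(3,10)=109/11<10.4=g(2,10)$. It should not appear even provisionally.

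Second, the equality case is quicker via strict convexity. The Hessian of $D$ is $2(m-1)I$, so $D$ can attain its maximum over the box only at vertices, and $(m,m)$ is the only vertex where $D=0$. Your two-step argument (an interior point forces edge points, which force corners) is correct but longer.

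Third, a purely algebraic alternative gives both the bound and the equality case at once. Put $a=m-x$ and $b=m-y$, so $0\le a,b\le m-2$. Then
\[
m^2(x+y-2)-(m-1)(x^2+y^2)=a\bigl[m(m-2)-(m-1)a\bigr]+b\bigl[m(m-2)-(m-1)b\bigr].
\]
Each bracket is positive because $(m-1)a\le(m-1)(m-2)<m(m-2)$. Hence the expression is nonnegative, with equality if and only if $a=b=0$, that is, $d_i=d_j=n-1$.
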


\begin{lemma}\label{kcd1}
Let $\Omega$ be a graph of order $n\geq 4$. For any non-pendant edge $v_iv_j\in E(\Omega)$ with $d_i\geq d_j\geq 2$ and $(d_i,d_j)\neq (2,2)$,
\begin{align}
h(d_i,d_j)\geq \sqrt{\frac{13}{3}}\label{kcd2}
\end{align}
with equality if and only if $(d_i,d_j)=(3,2)$.
 \end{lemma}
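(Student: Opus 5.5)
We have to show that $h(x,y)\ge\sqrt{13/3}$ for all integers $x\ge y\ge 2$ with $(x,y)\neq(2,2)$, with equality only at $(3,2)$. Note $h(3,2)^2=13/3$. The plan is to square and clear denominators: since $x+y-2>0$, the claim is equivalent to
\begin{align}
3(x^2+y^2)-13(x+y-2)\ge 0, \nonumber
\end{align}
that is, $g(x,y):=(3x^2-13x)+(3y^2-13y)+26\ge 0$.

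The key observation is that $g$ splits as a sum of two identical one-variable quadratics plus a constant. Let $q(t)=3t^2-13t$. On integers, $q$ takes the values
\begin{align}
q(2)=-14,\quad q(3)=-12,\quad q(4)=-4,\quad q(5)=10, \nonumber
\end{align}
and $q$ is increasing for $t\ge 3$, since its vertex is at $t=13/6$.

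I would then split into two cases according to $y$. In both cases I use $x\ge y$ and $(x,y)\neq(2,2)$.

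\emph{Case $y=2$.} Here $x\ge 3$ and $g=q(x)-14+26=q(x)+12\ge q(3)+12=0$. Equality holds exactly when $x=3$, because $q$ is strictly increasing on integers $\ge 3$.

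\emph{Case $y\ge 3$.} Here $x\ge 3$ as well, so $g\ge 2q(3)+26=2>0$, which is strict.

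Hence the inequality holds, with equality if and only if $(x,y)=(3,2)$. The only step needing care is the integrality: the real minimum of $q$ is below $-14$, so the argument must evaluate $q$ at integer points rather than use the continuous minimum. This is routine, and I do not expect a genuine obstacle.
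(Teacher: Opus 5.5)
Your proof is correct and follows essentially the same route as the paper: you use the same split into $d_j=2$, where equality occurs only at $d_i=3$, and $d_j\geq 3$, where the inequality is strict, with elementary algebra in each case. The difference is cosmetic: you handle both cases through the single quadratic $q(t)=3t^2-13t$, whereas the paper uses monotonicity of $x+4/x$ when $d_j=2$ and the bound $(d_i-3)^2+(d_j-3)^2\geq 0$ when $d_j\geq 3$, which amounts to your $q(t)\geq q(3)$ for $t\geq 3$.
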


\begin{proof} Let $v_iv_j\in E(G)$ be any non-pendant edge with $d_i\geq d_j\geq 2$ and $(d_i,d_j)\neq (2,2)$. First we assume that $d_i\geq d_j=2$. If $d_i=3$, then we have
   $$h(d_i,d_j)=\sqrt{\frac{d^2_i+d^2_j}{d_i+d_j-2}}=\sqrt{\frac{13}{3}}.$$
The equality holds in (\ref{kcd2}). Otherwise, $d_i\geq 4$. Thus we have
    $$h(d_i,d_j)=h(d_i,2)=\sqrt{\frac{d^2_i+4}{d_i}}=\sqrt{d_i+\frac{4}{d_i}}\geq \sqrt{5}>\sqrt{\frac{13}{3}}$$
as $f(x)=x+\frac{4}{x}$ is an increasing function on $x\geq 4$. The inequality strictly holds in (\ref{kcd2}).

\vspace*{3mm}

Next we assume that $d_i\geq d_j\geq 3$. Thus we have
\begin{align*}
(d_i-3)^2+(d_j-3)^2\geq 0,~\mbox{ that is, }~d^2_i+d^2_j\geq 6\,(d_i+d_j-3)&=\frac{13}{3}\,(d_i+d_j-2)+\frac{5}{3}\,(d_i+d_j-5.6)\\
   &>\frac{13}{3}\,(d_i+d_j-2),
\end{align*}
which implies
 $$h(d_i,d_j)=\sqrt{\frac{d^2_i+d^2_j}{d_i+d_j-2}}>\sqrt{\frac{13}{3}}.$$
The inequality strictly holds in (\ref{kcd2}).
\end{proof}

\begin{lemma}{\rm \cite{DASALI1}} \label{k0}
Let $\Omega$ be a graph containing no isolated edge. If $v_iv_j\in E(\Omega)$ such that $\Omega-v_iv_j$ contains no isolated edge, then
$$ASO(\Omega) > ASO(\Omega-v_iv_j).$$
\end{lemma}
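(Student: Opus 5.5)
The plan is to compare $ASO(\Omega)$ and $ASO(\Omega-v_iv_j)$ edge by edge. Write $d_i,d_j$ for the degrees in $\Omega$. Deleting $v_iv_j$ does three things:
\begin{itemize}
\item the term $h(d_i,d_j)$ disappears;
\item for each edge $v_iv_k$ with $k\neq j$, the term $h(d_i,d_k)$ becomes $h(d_i-1,d_k)$;
\item for each edge $v_jv_k$ with $k\neq i$, the term $h(d_j,d_k)$ becomes $h(d_j-1,d_k)$.
\end{itemize}
All other terms are unchanged. Hence
\begin{align*}
ASO(\Omega)-ASO(\Omega-v_iv_j)=h(d_i,d_j)+\sum_{v_k\in N(v_i)\setminus\{v_j\}}\bigl(h(d_i,d_k)-h(d_i-1,d_k)\bigr)+\sum_{v_k\in N(v_j)\setminus\{v_i\}}\bigl(h(d_j,d_k)-h(d_j-1,d_k)\bigr).
\end{align*}
The no-isolated-edge hypotheses on $\Omega$ and on $\Omega-v_iv_j$ guarantee that every term is well defined, i.e.\ every denominator $d+d'-2$ appearing is positive.

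The difficulty is that $h(x,y)$ is not monotone in $x$. For example, $h(2,1)=\sqrt5=h(3,1)$, and $h(2,2)=2<\sqrt5=h(1,2)$. So individual differences $h(x,y)-h(x-1,y)$ can be negative, and the argument must show that the positive term $h(d_i,d_j)$ dominates the accumulated negative contributions.

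The key step is to bound each difference from below. For fixed $y$, write $g(x)=h(x,y)^2=(x^2+y^2)/(x+y-2)$. Then $g'(x)$ has the sign of $x^2+2x(y-2)-y^2$, so $g$ decreases then increases, with minimum near $x^*=(2-y)+\sqrt{2y^2-4y+4}$. The decrease $h(x-1,y)-h(x,y)$ occurs only for small $x$.
\begin{itemize}
\item When $y=1$, the only loss is $h(1,1)$ versus $h(2,1)$, but $(1,1)$ is excluded since it would create an isolated edge.
\item When $y=1$ and $x=3$, the change from $(3,1)$ to $(2,1)$ has difference $0$.
\item When $y\ge2$, the decrease $h(x-1,y)-h(x,y)$ for $x\le x^*$ is small. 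I would check it is at most about $h(1,2)-h(2,2)=\sqrt5-2$, and more generally bounded by $h(1,y)-h(2,y)$, which tends to $0$ as $y$ grows.
\end{itemize}

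With this bound in hand, I would split into cases on $(d_i,d_j)$.
\begin{itemize}
\item If both $d_i,d_j$ are large, then $x=d_i$ lies in the increasing regime for every neighbour degree $y\le x$, so most differences are nonnegative. Only neighbours with $y$ large relative to $d_i$ contribute negatively, each by a small amount.
\item In general, I would bound the total loss by $(d_i-1)\max_y\bigl(h(d_i-1,y)-h(d_i,y)\bigr)^+$ plus the analogous term for $d_j$, and compare this with $h(d_i,d_j)\ge\sqrt{(d_i^2+d_j^2)/(d_i+d_j-2)}$.
\end{itemize}

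The main obstacle is making this comparison uniform: proving that $(x-1)\cdot\max_y(h(x-1,y)-h(x,y))^+$ grows slower than $h(x,\cdot)$. I would first try to show $h(x-1,y)-h(x,y)\le c/\sqrt{y}$ in the decreasing regime (which requires $y\gtrsim x$). Combined with $h(d_i,d_j)\ge\sqrt{(d_i+d_j)/2}$, this should reduce the claim to an elementary inequality. The small cases $d_i,d_j\le3$ would be checked directly.
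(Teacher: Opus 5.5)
The paper gives no proof of this lemma (it is quoted from \cite{DASALI1}), so your argument has to stand on its own. Your identity for $ASO(\Omega)-ASO(\Omega-v_iv_j)$ is correct. A common neighbour $v_k$ of $v_i$ and $v_j$ causes no double counting, since each of $v_iv_k$, $v_jv_k$ changes in one endpoint only. You also use the two no-isolated-edge hypotheses exactly where they are needed.

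The gap is that everything after this identity is a plan, not an argument. The one inequality that \emph{is} the lemma is only announced (``I would first try to show \dots''): namely, that $\sum_{k}\bigl(h(d_i-1,d_k)-h(d_i,d_k)\bigr)^{+}$ plus its $v_j$-analogue stays below $h(d_i,d_j)$. The bounds you actually state cannot close it. A per-edge bound of $\sqrt5-2$ (or of $h(1,y)-h(2,y)$) gives a total of order $d_i$, whereas $h(d_i,d_j)$ is only of order $\sqrt{d_i+d_j}$. Also, ``checking $d_i,d_j\le 3$ directly'' is not a finite check, because the neighbour degrees $d_k$ are unbounded. So even the small cases need an estimate that is uniform in $d_k$.

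The plan can be completed as follows. Write $\delta(x,y)=h(x-1,y)-h(x,y)$. A direct computation gives
\[
h(x-1,y)^2-h(x,y)^2=\frac{N}{(x+y-2)(x+y-3)},\qquad N=y^2-(2x-1)y-x^2+5x-2 .
\]

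\textbf{Case $x=2$.} Here $N=y^2-3y+4\le y(y-1)$, since $y\ge 2$ by hypothesis. Also $h(1,y),h(2,y)\ge 2$. Hence $\delta(2,y)\le\frac14$.

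\textbf{Case $x\ge 3$.} If $x=3$ then $N=(y-1)(y-4)$. If $x\ge4$ and $y\le x$ then $N\le -x^2+4x-1<0$. Hence $\delta(x,y)>0$ forces $y>x$. In that case, dividing by $h(x-1,y)+h(x,y)>2h(x,y)$ gives $\delta(x,y)<N/\bigl(2(x+y-3)\sqrt{(x^2+y^2)(x+y-2)}\bigr)$. Now use $x+y-3\ge y$, $(x^2+y^2)(x+y-2)>y^3$ and $N<y^2$ to get $\delta(x,y)<\frac{1}{2\sqrt y}<\frac{1}{2\sqrt x}$.

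So $\delta(x,y)<\frac{1}{2\sqrt x}$ in all cases. Each of your two sums is therefore at most $\frac{d-1}{2\sqrt d}<\frac{\sqrt d}{2}$, and it is empty if $d=1$. The chain
\[
h(d_i,d_j)^2>\frac{d_i^2+d_j^2}{d_i+d_j}\ge\frac{d_i+d_j}{2}\ge\Bigl(\frac{\sqrt{d_i}+\sqrt{d_j}}{2}\Bigr)^2
\]
then finishes the proof. This is your $c/\sqrt y$ idea with $c=\frac12$, and it needs no separate treatment of the increasing regime or of small degrees.
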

The following two results were established in \cite{DGA}.
\begin{lemma}{\rm \cite{DGA}} \label{k1}
Let $T$ be a tree of order $n\,(>3)$. Then
\begin{equation}\label{eq-trees}
2\,\sqrt{5}+2\,(n-3)\leq ASO(T)\leq (n-1)\,\sqrt{n+\frac{2}{n-2}}
\end{equation}
with equality on the left if and only if $T\cong P_n$, and equality on the right if and only if $T\cong S_n$.
\end{lemma}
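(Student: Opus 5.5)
We must prove Lemma \ref{k1}: for a tree $T$ of order $n>3$, $2\sqrt5+2(n-3)\le ASO(T)\le (n-1)\sqrt{n+\frac{2}{n-2}}$, with equality exactly for $P_n$ and $S_n$ respectively. The plan is to bound each edge contribution separately and then count edges.

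\textbf{Lower bound.} A tree with $n\ge 4$ has $n-1$ edges and at least two pendant vertices. Since $n>3$, no edge joins two pendant vertices. For a pendant edge $v_iv_j$ with $d_j=1$ and $d_i=d\ge 2$, we have $h(d,1)^2=\frac{d^2+1}{d-1}=d+1+\frac{2}{d-1}$. This is minimized at $d=2,3$ with value $5$, which is consistent with Lemma~\ref{lemma2.1}. We prove it directly so that $n=4$ is also covered: $h(d,1)\ge\sqrt5$. For a non-pendant edge, Lemma~\ref{lemma2.2} gives $h\ge 2$ with equality only for $(2,2)$. Let $p\ge 2$ be the number of pendant vertices. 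Then
\[ASO(T)\ge p\sqrt5+(n-1-p)\cdot 2 = 2(n-1)+p(\sqrt5-2)\ge 2\sqrt5+2(n-3).\]
Equality forces $p=2$ and every non-pendant edge to be of type $(2,2)$, so $T\cong P_n$. Conversely, $P_n$ attains the bound, since its two pendant edges have type $(2,1)$ and the rest have type $(2,2)$.

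\textbf{Upper bound.} The value $(n-1)\sqrt{n+2/(n-2)}$ is exactly $(n-1)\,h(n-1,1)$, because $\frac{(n-1)^2+1}{n-2}=n+\frac{2}{n-2}$. It therefore suffices to show that every edge of a tree satisfies $h(d_i,d_j)\le h(n-1,1)$, with equality only for type $(n-1,1)$.

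\emph{Pendant edges.} The function $d+1+\frac{2}{d-1}$ is increasing for $d\ge 3$, and its value at $d=2$ is at most its value at $n-1\ge 3$. So $h(d,1)\le h(n-1,1)$, with equality iff $d=n-1$ (when $n=4$, the values at $d=2$ and $d=3$ coincide, but then $d=3=n-1$ anyway, and a pendant edge $(2,1)$ in a tree on $4$ vertices forces $P_4$, which is handled by the strictness below).

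\emph{Non-pendant edges.} This is the main obstacle, since Lemma~\ref{lemma2.2}'s bound $\frac{n-1}{\sqrt{n-2}}$ is too weak for trees. In a tree, adjacent vertices satisfy $d_i+d_j\le n$. Set $s=d_i+d_j\le n$ with $d_i,d_j\ge 2$. Then $d_i^2+d_j^2\le (s-2)^2+4$, so
\[h^2\le \frac{(s-2)^2+4}{s-2}=(s-2)+\frac{4}{s-2}.\]
For $s-2\ge 2$ this is nondecreasing in $s$, hence at most $n-2+\frac{4}{n-2}$. We then need
\[n-2+\frac{4}{n-2}<n+\frac{2}{n-2},\]
which reduces to $\frac{2}{n-2}<2$. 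This holds for $n>3$, so non-pendant edges contribute strictly less than $h(n-1,1)$.

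\textbf{Conclusion.} Summing over the $n-1$ edges gives the upper bound. Equality requires every edge to be pendant with center degree $n-1$, so $T\cong S_n$.
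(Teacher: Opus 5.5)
Your proof is correct and complete. It also covers the case $n=4$, which Lemma~\ref{lemma2.1} (stated only for $n\ge 5$) does not reach. The paper gives no proof of this statement: it is quoted from \cite{DGA}, and the lower bound reappears as Lemma~\ref{e444} for arbitrary connected graphs. So the comparison is with the paper's own lemmas. Your lower bound is the natural counting argument: count the $p\ge 2$ pendant edges, use $h\ge\sqrt5$ on them and $h\ge 2$ on the rest, and note that $p=2$ forces a path. This is the same scheme the paper uses in its unicyclic-girth theorem.

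For the upper bound, one of your remarks is wrong, although it does not damage the proof. Lemma~\ref{lemma2.2} is not too weak, because
\[
\frac{(n-1)^2}{n-2}=n+\frac{1}{n-2}<n+\frac{2}{n-2}=h(n-1,1)^2 .
\]
Hence Lemma~\ref{lemma2.2} already gives $h(d_i,d_j)\le h(n-1,n-1)<h(n-1,1)$ for every non-pendant edge of any graph of order $n\ge 4$; this is the inequality $h(n-1,n-1)<h(n-1,1)$ recorded in Lemma~\ref{1ww1}. The upper bound and its equality case therefore follow from Lemmas~\ref{lemma2.1} and \ref{lemma2.2} together with your direct check at $n=4$, without using the tree structure on non-pendant edges.

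Your detour through $s=d_i+d_j\le n$ and $d_i^2+d_j^2\le (s-2)^2+4$ is unnecessary here, but it is not wasted. Tracking the equality cases in your two inequalities, you get the sharper tree bound $h(d_i,d_j)\le h(n-2,2)$ for non-pendant edges, with equality exactly when $\{d_i,d_j\}=\{n-2,2\}$. That is precisely Lemma~\ref{kh1}(1), which the paper needs later for the second-maximum tree theorem. The paper proves it by a three-case split ($d_i\le n-4$, $d_i=n-3$, $d_i=n-2$); your argument does it uniformly in two lines and for all $n\ge 4$.
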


\begin{lemma}\label{e444}{\rm \cite{DGA}}
Let $\Omega$ be a connected graph of order $n>2$. Then
$$
ASO(\Omega) \ge 2\,\sqrt{5} + 2\,(n-3)
$$
with equality if and only if $\Omega \cong P_n$.
\end{lemma}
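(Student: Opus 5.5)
The statement is Lemma~\ref{e444}: for every connected graph $\Omega$ of order $n>2$, $ASO(\Omega)\ge 2\sqrt{5}+2(n-3)$, with equality exactly when $\Omega\cong P_n$. The plan is to reduce to trees by deleting edges with Lemma~\ref{k0}, and then apply the tree bound of Lemma~\ref{k1}.

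\emph{Small orders.} For $n=3$ the only connected graphs are $P_3$ and $K_3$. We have $ASO(P_3)=2h(2,1)=2\sqrt{5}$, which matches the bound, and $ASO(K_3)=3h(2,2)=6>2\sqrt{5}$. So the statement holds for $n=3$ directly. From now on assume $n\ge 4$, so that Lemma~\ref{k1} is available.

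\emph{Reduction to a spanning tree.}
\begin{itemize}
\item Let $\Omega$ be connected of order $n\ge 4$ and not a tree, and let $T$ be a spanning tree of $\Omega$.
\item Delete the edges of $E(\Omega)\setminus E(T)$ one at a time. Every intermediate graph contains $T$ as a spanning subgraph, so it stays connected.
\item A connected graph on $n\ge 4$ vertices has no isolated edge. Hence at every step both the current graph and the graph after the deletion satisfy the hypotheses of Lemma~\ref{k0}.
\item Each deletion therefore strictly decreases $ASO$. Since at least one edge is deleted, $ASO(\Omega)>ASO(T)$.
\end{itemize}

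\emph{Conclusion.} By the left inequality in Lemma~\ref{k1}, $ASO(T)\ge 2\sqrt{5}+2(n-3)$. Combining with the previous step, every connected non-tree $\Omega$ satisfies the bound strictly. If $\Omega$ is itself a tree, Lemma~\ref{k1} gives the bound directly, with equality if and only if $\Omega\cong P_n$. This proves both the inequality and the characterization of equality.

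The one point that needs care is checking that Lemma~\ref{k0} applies at every deletion. This holds because connectivity is preserved throughout and $n\ge 4$ rules out isolated edges. If one preferred not to rely on Lemma~\ref{k0}, a direct alternative would be the edge-counting bound: there are at least two pendant edges when $\Omega$ is a tree, each with $h\ge\sqrt5$ by Lemma~\ref{lemma2.1}, and every other edge has $h\ge 2$ by Lemma~\ref{lemma2.2}. This route is messier when pendant vertices are few, since then the graph has at least $n$ edges, each contributing at least $2$.
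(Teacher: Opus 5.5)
The paper gives no proof of this lemma; it is quoted from \cite{DGA}, so there is nothing to compare line by line. Your argument is correct as written, given Lemmas~\ref{k0} and~\ref{k1}. The spanning-tree reduction via Lemma~\ref{k0} is the same device the paper relies on for Theorem~\ref{theorem5.2}. You check its hypotheses correctly: every intermediate graph is connected of order at least $3$, so it has no isolated edge. You are also right to treat $n=3$ separately, since Lemma~\ref{k1} needs $n>3$.

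You undersell your closing alternative, though. It is a complete and shorter proof, and the case you call messy is the easy one. Let $m$ be the number of edges and $p$ the number of pendant edges. Use the per-edge bounds $h(d,1)^2=d+1+\frac{2}{d-1}\ge 5$ for integers $d\ge 2$, and $h(a,b)^2\ge 4$ for $a,b\ge 2$ (equivalent to $(a-2)^2+(b-2)^2\ge 0$). They give $ASO(\Omega)\ge 2m+(\sqrt{5}-2)\,p$.
\begin{itemize}
\item If $m\ge n$, this is at least $2n>2\sqrt{5}+2(n-3)$, since $\sqrt{5}<3$.
\item If $m=n-1$, then $p\ge 2$ gives the bound, and equality forces $p=2$, i.e.\ $\Omega\cong P_n$.
\end{itemize}
This route is self-contained and covers $n=3$ without a separate check. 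It uses only the lower-bound halves of Lemmas~\ref{lemma2.1} and~\ref{lemma2.2}, which hold for every order despite the stated hypotheses $n\ge5$ and $n\ge4$.

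Your main route is shorter only because it hands the work to Lemma~\ref{k0}, which is genuinely nontrivial. The function $h$ is not monotone in each argument: $h(1,y)>h(2,y)$ for all $y\ge 2$, and $h(2,y)>h(3,y)$ for $y\ge 5$. So deleting an edge can increase the contributions of neighbouring edges, and proving a strict overall decrease requires a balancing argument. For a bound this coarse, the counting argument is the more elementary and more robust proof.
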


We now investigate the minimum $ASO$-value for bipartite graphs. Let $\mathcal{B}_{p,q}$ represent the set of bipartite graphs whose bipartition sets have sizes $p$ and $q$, respectively. Let $\mathcal{B}_n$ denote the set of all bipartite graphs of order $n$. Since $P_n$ is a bipartite graph, the minimal value of $ASO$ within $\mathcal{B}_{p,q}$ can be determined.

\begin{proposition}
For any connected graph $\Omega\in\mathcal{B}_{n}$,
\begin{align*}
        ASO(\Omega)\geq 2\,\sqrt{5}+2\,(n-3)=ASO(P_n)
\end{align*}
with equality if and only if $\Omega\cong P_n$.
 \end{proposition}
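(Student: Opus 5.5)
This is an immediate consequence of Lemma~\ref{e444}, so the plan is to restrict that lemma to bipartite graphs and check two small points.

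First, I will verify that $P_n$ lies in $\mathcal{B}_n$. A path is bipartite: color its vertices alternately along the path, which gives parts of sizes $\lceil n/2\rceil$ and $\lfloor n/2\rfloor$. I will also compute $ASO(P_n)$ directly, for $n\ge 3$, to confirm the value stated in the proposition.
\begin{itemize}
\item The path has two pendant edges of type $(2,1)$, each contributing $h(2,1)=\sqrt{5}$.
\item It has $n-3$ edges of type $(2,2)$, each contributing $h(2,2)=2$.
\item Hence $ASO(P_n)=2\sqrt5+2(n-3)$.
\end{itemize}
When $n=3$, the path $P_3$ has exactly the two $(2,1)$ edges, and the formula still holds.

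Second, let $\Omega\in\mathcal{B}_n$ be connected of order $n>2$; this order condition is implicit, since $ASO$ is only defined for such graphs. Then $\Omega$ is in particular a connected graph of order $n>2$. Lemma~\ref{e444} applies directly and gives $ASO(\Omega)\ge 2\sqrt5+2(n-3)$, with equality if and only if $\Omega\cong P_n$. Because $P_n$ is itself in $\mathcal{B}_n$ by the first step, the bound is attained within the class. The equality characterization transfers unchanged.

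I do not expect any real obstacle. The only points needing care are the small cases and the membership of the extremal graph $P_n$ in the class, and both are handled above.
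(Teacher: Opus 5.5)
Your proposal is correct and matches the paper's proof, which likewise obtains both the bound and the equality case directly from Lemma~\ref{e444} restricted to connected bipartite graphs. Your extra checks, that $P_n\in\mathcal{B}_n$ and that $ASO(P_n)=2\sqrt{5}+2(n-3)$ (including the case $n=3$), are correct and simply make explicit what the paper leaves implicit.
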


 \begin{proof} Since connected graph $\Omega\in\mathcal{B}_n$, by Lemma~\ref{e444}, we have
$$
ASO(\Omega) \ge 2\,\sqrt{5} + 2\,(n-3).
$$
Moreover, equality holds if and only if $\Omega \cong P_n$.
\end{proof}

Based on the result from Lemma \ref{k0}, the following result can be easily obtained.
 \begin{theorem}\label{theorem5.2}
          For any graph $\Omega\in\mathcal{B}_{n}$,
     \begin{align}
         ASO(\Omega)\leq  \Big\lceil\frac{n}{2}\Big\rceil\Big\lfloor\frac{n}{2}\Big\rfloor\sqrt{\frac{\lceil\frac{n}{2}\rceil^2+\lfloor\frac{n}{2}\rfloor^2}{n-2}}\nonumber
     \end{align}
with equality if and only if $\Omega\cong K_{\lceil\frac{n}{2}\rceil,\lfloor\frac{n}{2}\rfloor}$.
\end{theorem}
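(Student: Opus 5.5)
The plan is to combine Lemma~\ref{k0} (adding edges strictly increases $ASO$ as long as no isolated edge is created) with an elementary optimization over complete bipartite graphs. Since $ASO$ is only defined when no component is $P_2$, I work with graphs $\Omega\in\mathcal{B}_n$ having no isolated edge. I would also assume no isolated vertices, or note that they contribute nothing to the sum.

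\textbf{Step 1 (reduction to complete bipartite graphs).} Fix a bipartition $(X,Y)$ of $\Omega$ with $|X|=p$, $|Y|=q$, $p+q=n$, $p\ge q\ge 1$. Add the missing edges between $X$ and $Y$ one at a time until $K_{p,q}$ is reached. An isolated edge can never appear along the way: the only way to get one is for a component to be $K_2$, but adding an edge only merges components or joins vertices already present, so no $K_2$ component is created. The single exception is the case $p=q=1$, $n=2$, which is excluded since $n\ge 3$. Each addition strictly increases $ASO$ by Lemma~\ref{k0}, so $ASO(\Omega)\le ASO(K_{p,q})$, with equality if and only if $\Omega\cong K_{p,q}$. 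A small point to handle is that $\Omega$ may be disconnected with isolated vertices, in which case the bipartition should be chosen to put those vertices somewhere. Each added edge incident to an isolated vertex still keeps the graph free of isolated edges, because the other endpoint already has an edge, except when both sides are trivial.

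\textbf{Step 2 (evaluate).} In $K_{p,q}$ every edge joins a vertex of degree $q$ to one of degree $p$, so
\[
ASO(K_{p,q})=pq\sqrt{\frac{p^2+q^2}{n-2}}=\frac{1}{\sqrt{n-2}}\,pq\sqrt{n^2-2pq}.
\]

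\textbf{Step 3 (maximize over $p$).} Put $t=pq$ and $g(t)=t^2(n^2-2t)$. Then $g'(t)=2t(n^2-3t)>0$ for $t<n^2/3$, and since $pq\le n^2/4<n^2/3$, $g$ is strictly increasing on the admissible range. Hence $ASO(K_{p,q})$ is maximized exactly when $pq$ is maximal, that is, at $p=\lceil n/2\rceil$, $q=\lfloor n/2\rfloor$, giving the stated bound. Strictness in Steps 1 and 3 gives the equality characterization.

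\textbf{Main obstacle.} I expect the main obstacle to be the monotonicity in Step 3, namely that the decrease of $p^2+q^2$ does not outweigh the increase of $pq$. The substitution $t=pq$ settles it, because $g$ is increasing on the whole range $t\le n^2/4$. The rest is bookkeeping about isolated edges when applying Lemma~\ref{k0}.
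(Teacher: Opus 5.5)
Your proposal is correct and follows essentially the paper's route: Lemma~\ref{k0} reduces the problem to $K_{p,q}$, and a one-variable monotonicity argument then selects the balanced partition. Your substitution $t=pq$, $g(t)=t^2(n^2-2t)$ is just a tidier version of the paper's direct differentiation of $x(n-x)\sqrt{x^2+(n-x)^2}$.

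One bookkeeping correction is needed. Joining two isolated vertices \emph{does} create an isolated edge, so your claim that none can ever appear is false as stated. Fix it by first adding all missing edges at the two endpoints of some existing edge $ab$. Each such edge has an endpoint of positive degree, and afterwards the graph is connected, so no later addition can create an isolated edge. The edgeless case, where $ASO=0$, is trivial.
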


\begin{proof}
  Now, by the  consequence of Lemma \ref{k0}, for any graph $\Omega\in\mathcal{B}_{p,q}$,
  \begin{align}
ASO(\Omega)\leq ASO(K_{p,q})=pq\sqrt{\frac{p^2+q^2}{p+q-2}}=q(n-q)\sqrt{\frac{q^2+(n-q)^2}{n-2}},\nonumber
  \end{align}
where $p+q=n$, $p\geq q\geq 1$. Then $1\leq q\leq \lfloor\frac{n}{2}\rfloor$. Now let
    $$f(x)=x(n-x)\sqrt{\frac{x^2+(n-x)^2}{n-2}},\,1\leq x\leq \Big\lfloor\frac{n}{2}\Big\rfloor.$$
Then
  \begin{align*}
      f^\prime(x)=\frac{(n-2\,x)(3\,x^2-3\,nx+n^2)}{\sqrt{n-2}\sqrt{2x^2-2nx+n^2}}\geq 0.
  \end{align*}
  It follows that $f(x)$ is increasing for $1\leq x\leq \lfloor n/2\rfloor$. Thus, $f(x)\leq f(\lfloor n/2\rfloor)$, that is, when $x=\lfloor\frac{n}{2}\rfloor$, $f(x)$ attains its maximal value. Therefore, for any $\Omega\in\mathcal{B}_{n}$,
  \begin{align}
      ASO(\Omega)\leq \Big\lceil\frac{n}{2}\Big\rceil\Big\lfloor\frac{n}{2}\Big\rfloor\sqrt{\frac{\lceil\frac{n}{2}\rceil^2+\lfloor\frac{n}{2}\rfloor^2}{n-2}}\nonumber
  \end{align}
with equality if and only if $\Omega\cong K_{\lceil\frac{n}{2}\rceil,\lfloor\frac{n}{2}\rfloor}$. We complete the proof of this theorem.
\end{proof}

The unicyclic graph $C_g\star P_{n-g}$ is defined as the graph resulting from attaching a path $P_{n-g}$ onto one vertex of cycle $C_g$. Let $\mathcal{U}_{n,g}$ denote the family of all unicyclic grapphs of order $n$ with girth $g$. The subsequent theorem establishes the minimum $ASO$-value for the unicyclic graphs with given girth.

\begin{theorem} Let $\Omega\in\mathcal{U}_{n,g}~ (n\geq 4,~ g\geq 3)$. If $g\leq n-2$, then
\begin{align}
ASO(\Omega)\geq \sqrt{5}+\sqrt{39}+2\,(n-4)\label{kd0}
\end{align}
with equality if and only if $\Omega\cong C_g\star P_{n-g}$.
\end{theorem}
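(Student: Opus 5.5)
The plan is to subtract the baseline value $2$ per edge and bound the total excess. Since $\Omega$ is unicyclic, $|E(\Omega)|=n$, and we can write
$$ASO(\Omega)=2n+\sum_{v_iv_j\in E(\Omega)}\big(h(d_i,d_j)-2\big).$$
For $C_g\star P_{n-g}$ with $n-g\ge 2$, the edges are as follows: one pendant edge of type $(2,1)$, three edges of type $(3,2)$ at the attachment vertex, and $n-4$ edges of type $(2,2)$. This gives $\sqrt5+3\sqrt{13/3}+2(n-4)=\sqrt5+\sqrt{39}+2(n-4)$. In excess form, (\ref{kd0}) is therefore equivalent to
$$\sum_{e\in E(\Omega)}\big(h(e)-2\big)\ \ge\ (\sqrt5-2)+3\Big(\sqrt{13/3}-2\Big).$$
By Lemma~\ref{lemma2.1}, every pendant edge contributes excess at least $\sqrt5-2$, with equality iff its type is $(2,1)$ or $(3,1)$. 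For small $n$ I would instead check directly that $h(k,1)=\sqrt{(k^2+1)/(k-1)}\ge\sqrt5$ for all $k\ge2$. By Lemma~\ref{lemma2.2} and Lemma~\ref{kcd1}, every non-pendant edge contributes excess at least $0$, with equality iff it is of type $(2,2)$. Every non-pendant edge other than $(2,2)$ contributes at least $\sqrt{13/3}-2$, with equality iff it is of type $(3,2)$.

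Next comes the structural step. Since $g\le n-2$, $\Omega$ is not a cycle, so it has $p\ge1$ pendant vertices. It also has a cycle vertex $v$ with $d_v\ge3$, and the two cycle edges at $v$ are non-pendant and not of type $(2,2)$. I would then split into two cases according to $p$.

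\emph{Case $p\ge2$.} The excess is at least $2(\sqrt5-2)+2(\sqrt{13/3}-2)$. I would compare this with the target numerically: $\sqrt5-2\approx0.236$ and $\sqrt{13/3}-2\approx0.082$, so the left side is about $0.636$ and the target is about $0.481$. The inequality is therefore strict, and no equality occurs in this case.

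\emph{Case $p=1$.} The tree part is a single path hanging at a cycle vertex $v$. All vertices other than $v$ and the pendant vertex then have degree $2$, and $d_v=3$. Because $n-g\ge2$, the path has length at least $2$. Hence the three edges at $v$ are all non-pendant, each of type $(3,2)$, contributing $3(\sqrt{13/3}-2)$. The pendant edge is of type $(2,1)$, and every other edge is of type $(2,2)$. Thus $\Omega\cong C_g\star P_{n-g}$ and equality holds.

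Combining the two cases gives both the bound and the characterization of equality. The main obstacle I expect is making the case split airtight. The hypothesis $g\le n-2$ is exactly what rules out the configuration where a single pendant edge is of type $(3,1)$ at $v$. That configuration occurs for $g=n-1$ and has a smaller value, since only two $(3,2)$ edges appear there. I would emphasize this point to justify why the condition is needed.
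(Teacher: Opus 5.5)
Your proof is correct, but it takes a different route from the paper's.

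\textbf{The paper's route.} The paper first proves the counting claim $x_{22}\le n-4$. This needs a case analysis on whether $\Delta\ge 4$ or $\Delta=3$, and on whether a degree-$3$ cycle vertex has a pendant neighbor. It then runs one uniform chain,
\[
ASO(\Omega)\ \ge\ 2x_{22}+\sqrt{5}\,p+\sqrt{13/3}\,(n-p-x_{22}),
\]
and finishes using $p\ge 1$ and $x_{22}\le n-4$. The equality case is recovered by reading the structure off the forced counts $x_{21}=1$, $x_{22}=n-4$, $x_{32}=3$.

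\textbf{Your route.} You split on the number $p$ of pendant vertices.
\begin{itemize}
\item For $p=1$ the graph is forced to be $C_g\star P_{n-g}$ and its value is computed exactly, so no inequality is needed. State the one-line reason: a hanging tree whose root has tree-degree at least $2$ has at least two leaves other than the root. Hence $p=1$ forces a single hanging path with the root as an endpoint.
\item For $p\ge 2$ a crude excess count suffices. Your numerical comparison there is exact. The excess is at least $p(\sqrt5-2)+2(\sqrt{13/3}-2)$, so it exceeds the target by at least
\[
(p-1)(\sqrt5-2)-(\sqrt{13/3}-2)\ \ge\ \sqrt5-\sqrt{13/3}\ >\ 0 .
\]
This is the same inequality $5>13/3$ that drives the paper's last step.
\end{itemize}
The small-$n$ caveat is also unnecessary: $3\le g\le n-2$ forces $n\ge 5$, so Lemma~\ref{lemma2.1} applies directly.

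\textbf{Comparison.} Your approach avoids the somewhat fiddly proof of $x_{22}\le n-4$, and it makes the equality characterization immediate rather than a reconstruction from edge counts. The paper's approach gives a single linear lower bound in terms of $p$ and $x_{22}$. That bound carries more quantitative information about how far a non-extremal unicyclic graph sits above the minimum.
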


\begin{proof} Let $p$ denote the number of pendant vertices of $\Omega$. Since $\Omega$ is a unicyclic graph with girth $g \le n-2$, it follows that $p \ge 1$. Let $v_{\Delta}$ be a vertex of maximum degree $\Delta$ in $\Omega$. Then $\Delta\geq 3$ as $g\leq n-2$. We now prove the following claim:

\vspace*{2mm}

\begin{claim}\label{c1} $x_{22}\leq n-4$.
\end{claim}

\vspace*{2mm}

\noindent
{\bf Proof of Claim \ref{c1}.} If $\Delta\geq 4$, then
     $$\sum\limits_{1\leq j\leq i\leq \Delta,\atop i\geq 4}\,x_{ij}\geq 4.$$
Since $\Omega$ is a unicyclic graph, $\sum\limits_{1\leq j\leq i\leq \Delta}\,x_{ij}=n$. Thus we obtain
$$x_{22}\leq x_{31}+x_{32}+x_{33}+x_{21}+x_{22}=\sum\limits_{1\leq j\leq i\leq \Delta}\,x_{ij}-\sum\limits_{1\leq j\leq i\leq \Delta,\atop i\geq 4}\,x_{ij}\leq  n-4.$$
Otherwise, $\Delta=3$. Without loss of generality, we can assume that $v_{\Delta}\in V(C_g)$. Since $g\leq n-2$, then there exists a vertex $v_j$ in $\Omega$ with degree $d_j=1$ such that $v_sv_j\in E(\Omega)$. First we assume that $v_{\Delta}v_j\notin E(\Omega)$. Thus we obtain
$$\sum\limits_{1\leq j\leq i=3}\,x_{ij}+x_{21}\geq 4.$$
Hence
$$x_{22}\leq \sum\limits_{1\leq j\leq i\leq \Delta}\,x_{ij}-\sum\limits_{1\leq j\leq i=3}\,x_{ij}-x_{21}\leq  n-4.$$

\vspace*{3mm}

Next, we assume that $v_{\Delta}v_j\in E(\Omega)$. Since $g\leq n-2$, then exists a vertex $v_i\in V(C_g)$ of degree $3$ such that $v_i\neq v_{\Delta}$. Then one can easily see that
 $$\sum\limits_{1\leq j\leq i=3}\,x_{ij}\geq 5~\mbox{ and hence }~x_{22}\leq  \sum\limits_{1\leq j\leq i\leq \Delta}\,x_{ij}-\sum\limits_{1\leq j\leq i=3}\,x_{ij}\leq n-5.$$
This proves the {\bf Claim \ref{c1}}.

\vspace*{3mm}

In accordance with the definition of the augmented Sombor index with Lemmas \ref{lemma2.1} and \ref{kcd1}, we obtain
\begin{align}
ASO(\Omega)=\sum_{\substack{1\leq j\leq i\leq \Delta, \\i+j\neq 2}}x_{ij}h(i,j)
&=x_{22}h(2,2)+\sum\limits_{2\leq i\leq \Delta}\,x_{i1}\,h(i,1)+\sum\limits_{2\leq j\leq i\leq \Delta,\atop i>2}\,x_{ij}\,h(i,j)\nonumber
\end{align}
\begin{align}
&\geq 2\,x_{22}+\sqrt{5}\,\sum\limits_{2\leq i\leq \Delta}\,x_{i1}+\sqrt{\frac{13}{3}}\,\sum\limits_{2\leq j\leq i\leq \Delta,\atop i>2}\,x_{ij}\label{kd1}\\[2mm]
&= 2\,x_{22}+p\,\sqrt{5}+\sqrt{\frac{13}{3}}\,(n-p-x_{22})\nonumber\\[2mm]
&=p\,\left(\sqrt{5}-\sqrt{\frac{13}{3}}\right)-x_{22}\,\left(\sqrt{\frac{13}{3}}-2\right)+\sqrt{\frac{13}{3}}\,n\nonumber\\[2mm]
&\geq \sqrt{5}-\sqrt{\frac{13}{3}}-\left(\sqrt{\frac{13}{3}}-2\right)\,(n-4)+\sqrt{\frac{13}{3}}\,n\label{kd2}\\[2mm]
&=\sqrt{5}+2\,(n-4)+\sqrt{39}.\nonumber
\end{align}
The first part of the proof is done.

\vspace*{3mm}

Suppose that equality holds in (\ref{kd0}). Then all the inequalities in the above must be equalities. From equality in (\ref{kd1}), we have $h(i,1)=\sqrt{5}$ for all $2\leq i\leq n-1$, and $h(i,j)=\sqrt{\frac{13}{3}}$ for $2\leq j\leq i\leq n-1$, $i>2$. From equality in (\ref{kd2}), we obtain $p=1$ and $x_{22}=n-4$. Since $\Omega$ is a unicyclic graph of order $n$ with girth $g\,(\leq n-2)$, there are exactly one pendant edge $v_iv_j\in E(G)$ with $(d_i,d_j)=(2,1)$, $n-4$ edges whose end vertices are of degree $2$, and three non-pendant edges $v_iv_j\in E(G)$ with $(d_i,d_j)=(3,2)$. Hence $\Omega\cong C_g\star P_{n-g}$.

\vspace*{3mm}

Conversely, let $\Omega\cong C_g\star P_{n-g}$. Then one can easily see that $x_{21}=1$, $x_{22}=n-4$ and $x_{32}=3$. Thus we obtain $ASO(\Omega)=\sqrt{5}+\sqrt{39}+2\,(n-4)$.
 \end{proof}

 \begin{lemma}\label{lemma2.3}
If $1\leq q\leq p\leq n-3$, then
     \begin{align}
    &p\sqrt{\frac{(p+1)^2+1}{p}}+q\sqrt{\frac{(q+1)^2+1}{q}}+\sqrt{\frac{(p+1)^2+(q+1)^2}{n-2}}\nonumber\\[2mm]
    &~~~~~~~~~~~~~~< (p+1)\sqrt{\frac{(p+2)^2+1}{p+1}}+(q-1)\sqrt{\frac{q^2+1}{q-1}}+\sqrt{\frac{(p+2)^2+q^2}{n-2}}.\label{inequality2}
     \end{align}
 \end{lemma}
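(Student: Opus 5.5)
The plan is to rewrite the three-term sums in terms of a single auxiliary function, so that the inequality splits into a comparison of increments of that function plus one elementary comparison of the central-edge terms. Here $n=p+q+2$ is understood, as for the double star $DS(p,q)$, so $n-2=p+q>0$; this identification is an assumption on my part, since the statement alone only says $q\leq p\leq n-3$. For $q=1$ the term with factor $q-1$ is read as $0$.

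\textbf{Step 1: the auxiliary function.} Set
\[
g(x)=\sqrt{x\big((x+1)^2+1\big)}=\sqrt{x^3+2x^2+2x},\qquad x\ge 0.
\]
Then $p\sqrt{\frac{(p+1)^2+1}{p}}=g(p)$ and $q\sqrt{\frac{(q+1)^2+1}{q}}=g(q)$. Also $(p+1)\sqrt{\frac{(p+2)^2+1}{p+1}}=g(p+1)$, and since $\big((q-1)+1\big)^2+1=q^2+1$, we get $(q-1)\sqrt{\frac{q^2+1}{q-1}}=g(q-1)$. The $q=1$ convention matches $g(0)=0$. Hence \eqref{inequality2} is equivalent to
\[
\big[g(q)-g(q-1)\big]-\big[g(p+1)-g(p)\big]
<\frac{\sqrt{(p+2)^2+q^2}-\sqrt{(p+1)^2+(q+1)^2}}{\sqrt{n-2}}.
\]

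\textbf{Step 2: the right-hand side is strictly positive.} We have
\[
(p+2)^2+q^2-(p+1)^2-(q+1)^2=2(p-q)+2>0,
\]
so the central-edge term strictly increases. It therefore suffices to prove the weak inequality $g(p+1)-g(p)\ge g(q)-g(q-1)$.

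\textbf{Step 3: comparing the increments via convexity.} Put $\varphi=x^3+2x^2+2x$. Then $g''=\frac{2\varphi\varphi''-(\varphi')^2}{4\varphi^{3/2}}$, and a direct expansion gives
\[
2\varphi\varphi''-(\varphi')^2=3x^4+8x^3+12x^2-4.
\]
This is positive for $x\ge 1$, so $g$ is convex on $[1,\infty)$.

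If $q\ge 2$, the intervals $[q-1,q]$ and $[p,p+1]$ both lie in $[1,\infty)$, and $p\ge q$ places the second to the right of (or equal to) the first. Convexity then gives $g(p+1)-g(p)\ge g(q)-g(q-1)$.

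The main obstacle is the failure of convexity near $0$, which matters exactly when $q=1$. There I would compare directly: $g(1)-g(0)=\sqrt5$, while $g(2)-g(1)=\sqrt{20}-\sqrt5=\sqrt5$. Convexity on $[1,\infty)$ then gives $g(p+1)-g(p)\ge g(2)-g(1)=\sqrt5$ for all $p\ge1$, so the weak inequality holds here too.

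Combining Step 3 with the strict inequality of Step 2 yields \eqref{inequality2}.
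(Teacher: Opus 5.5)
Your proof is correct and is essentially the paper's argument: the paper shows that the increment $f(x)=g(x+1)-g(x)$ satisfies $f'(x)>0$ on $[1,\infty)$, which plays the same role as your convexity of $g$ there. It gets strictness from the central-edge term exactly as in your Step 2.

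Your explicit treatment of $q=1$ via $g(1)-g(0)=g(2)-g(1)=\sqrt5$ is more careful than the paper. The paper evaluates $f(q-1)=f(0)$ outside the range where it proved monotonicity, and asserts $f(p)-f(q-1)>0$, which is false at $p=q=1$; the lemma survives only because the central term supplies strictness. Also, your identification $n=p+q+2$ is not needed, since $p\le n-3$ already gives $n-2\ge 2$.
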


 \begin{proof}
 Let
 \begin{align}
     f(x)&=(x+1)\sqrt{\frac{(x+2)^2+1}{x+1}}-x\sqrt{\frac{(x+1)^2+1}{x}}\nonumber\\[2mm]
     &=\sqrt{(x+1)(x+2)^2+(x+1)}-\sqrt{x(x+1)^2+x},~x\geq 1. \nonumber
 \end{align}
 It is easy to see that
 \begin{align}
    & 9\,x^6+63\,x^5+191\,x^4+293\,x^3+218\,x^2+46\,x-20>0~~~\textup{when}~ x\geq 1,\nonumber\\[1mm]
  \textup{that is,}~~~&   (3\,x^2+10\,x+9)^{2}x(x^2+2\,x+2)>(3\,x^2+4\,x+2)^{2}(x+1)(x^2+4\,x+5),\nonumber\\[1mm]
      \textup{that is,}~~~&(3\,x^2+10\,x+9)\sqrt{x(x^2+2\,x+2)}>(3\,x^2+4\,x+2)\sqrt{(x+1)(x^2+4\,x+5)},\nonumber\\[2mm]
       \textup{that is,}~~~&\frac{3\,x^2+10\,x+9}{2\sqrt{(x+1)(x^2+4\,x+5)}}>\frac{3\,x^2+4\,x+2}{2\sqrt{x(x^2+2\,x+2)}},\nonumber
       \end{align}
       \begin{align}
     \textup{that is,}~~~& f^\prime(x)=\frac{3\,x^2+10\,x+9}{2\sqrt{(x+1)(x^2+4\,x+5)}}-\frac{3\,x^2+4\,x+2}{2\sqrt{x(x^2+2\,x+2)}}>0.\nonumber
     \end{align}

\vspace*{2mm}

It is deduced that $f(x)$ is strictly increasing. Since
$p\geq q$, we obtain
 $$\sqrt{(p+2)^2+q^2}>\sqrt{(p+1)^2+(q+1)^2}.$$
Using the above results with $p\geq q$, we obtain
     \begin{align}
         &~~~~(p+1)\sqrt{\frac{(p+2)^2+1}{p+1}}+(q-1)\sqrt{\frac{q^2+1}{q-1}}+\sqrt{\frac{(p+2)^2+q^2}{n-2}}\nonumber\\[2mm]
         &~~~~~~~~~~~~- \left(p\sqrt{\frac{(p+1)^2+1}{p}}+q\sqrt{\frac{(q+1)^2+1}{q}}+\sqrt{\frac{(p+1)^2+(q+1)^2}{n-2}}\right)\nonumber\\[2mm]
         &>\left((p+1)\sqrt{\frac{(p+2)^2+1}{p+1}}-p\sqrt{\frac{(p+1)^2+1}{p}}\right)-\left(q\sqrt{\frac{(q+1)^2+1}{q}}-(q-1)\sqrt{\frac{q^2+1}{q-1}}\right)\nonumber\\[2mm]
         &=f(p)-f(q-1)>0.\nonumber
     \end{align}
     Therefore, inequality (\ref{inequality2}) is satisfied.
 \end{proof}

Building upon the result established in Lemma \ref{lemma2.3}, we are now able to determine the extremal values of $ASO$ index for double stars.
\begin{proposition}\label{s1}
Let $DS(p,q)$ be a double star of order $n\,(p+q=n-2,\,p\geq q\geq 1)$. Then
     \begin{align}
        &\Big\lceil\frac{n-2}{2}\Big\rceil\sqrt{\frac{\left(\lceil\frac{n-2}{2}\rceil+1\right)^2+1}{\lceil\frac{n-2}{2}\rceil}}+ \Big\lfloor\frac{n-2}{2}\Big\rfloor\sqrt{\frac{\left(\lfloor\frac{n-2}{2}\rfloor+1\right)^2+1}{\lfloor\frac{n-2}{2}\rfloor}}+\sqrt{\frac{\left(\lceil\frac{n-2}{2}\rceil+1\right)^2+\left(\lfloor\frac{n-2}{2}\rfloor+1\right)^2}{n-2}}\nonumber\\[3mm]
        &\leq ASO(DS(p,q))\leq (n-3)\sqrt{\frac{(n-2)^2+1}{n-3}}+\sqrt{\frac{(n-2)^2+4}{n-2}}+\sqrt{5},\nonumber
     \end{align}
    herein, equality is attained on the left side if and only if $T\cong DS(\lceil\frac{n-2}{2}\rceil,\lfloor\frac{n-2}{2}\rfloor)$, while equality is attained on the right side if and only if $T\cong DS(n-3,1)$.
 \end{proposition}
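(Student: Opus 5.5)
Write $ASO(DS(p,q))$ explicitly and then compare consecutive members of the one-parameter family of double stars using Lemma~\ref{lemma2.3}. In $DS(p,q)$ with $p+q=n-2$, the two centers have degrees $p+1$ and $q+1$. There are $p$ pendant edges of type $(p+1,1)$, $q$ pendant edges of type $(q+1,1)$, and one central edge of type $(p+1,q+1)$, whose degree sum minus two is $p+q=n-2$. Therefore
\begin{align*}
F(p,q):=ASO(DS(p,q))=p\sqrt{\frac{(p+1)^2+1}{p}}+q\sqrt{\frac{(q+1)^2+1}{q}}+\sqrt{\frac{(p+1)^2+(q+1)^2}{n-2}} .
\end{align*}
Since $p+q=n-2$ is fixed, this is a function of the single parameter $q$, with $1\le q\le \lfloor\frac{n-2}{2}\rfloor$ and $p=n-2-q$.

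Lemma~\ref{lemma2.3} says precisely that $F(p,q)<F(p+1,q-1)$ whenever $1\le q\le p\le n-3$. The hypothesis of the lemma is met for every admissible pair with $q\ge 2$, because then $p\le n-4$. Moving one leaf from the smaller star to the larger one therefore strictly increases $ASO$. One subtlety is that the lemma's right-hand side contains $(q-1)\sqrt{\frac{q^2+1}{q-1}}$, which is undefined at $q=1$. So I apply the lemma only for $q\ge 2$, where $DS(p+1,q-1)$ is again a genuine double star with $q-1\ge 1$.

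Iterating, I get the strict chain
\begin{align*}
F\Big(\Big\lceil\tfrac{n-2}{2}\Big\rceil,\Big\lfloor\tfrac{n-2}{2}\Big\rfloor\Big)<\cdots<F(p,q)<F(p+1,q-1)<\cdots<F(n-3,1).
\end{align*}
Hence the balanced double star $DS(\lceil\frac{n-2}{2}\rceil,\lfloor\frac{n-2}{2}\rfloor)$ is the unique minimizer and $DS(n-3,1)$ is the unique maximizer. Uniqueness follows because every inequality in the chain is strict and distinct admissible $q$ give non-isomorphic double stars.

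It remains to evaluate the two endpoints. Substituting $p=\lceil\frac{n-2}{2}\rceil$, $q=\lfloor\frac{n-2}{2}\rfloor$ into $F$ gives the stated lower bound verbatim. For $(p,q)=(n-3,1)$ we get $p+1=n-2$ and $q+1=2$. This yields $(n-3)\sqrt{\frac{(n-2)^2+1}{n-3}}$ from the large star, $\sqrt{\frac{4+1}{1}}=\sqrt5$ from the single leaf of the small star, and $\sqrt{\frac{(n-2)^2+4}{n-2}}$ from the central edge, which is the stated upper bound.

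The only delicate point is checking that the index range in Lemma~\ref{lemma2.3} covers every step of the chain, including the endpoints $q=\lfloor\frac{n-2}{2}\rfloor$ and $q=2$. In particular, when $p=q$ the lemma still applies, since it only requires $q\le p$. Finally, the degenerate cases with only one admissible $q$ ($n\le 5$) make both bounds coincide, so the statement still holds there.
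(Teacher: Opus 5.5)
Your proposal is correct and takes the route the paper intends: write $ASO(DS(p,q))$ explicitly and iterate Lemma~\ref{lemma2.3} to move one leaf at a time from the balanced pair $(\lceil\frac{n-2}{2}\rceil,\lfloor\frac{n-2}{2}\rfloor)$ to $(n-3,1)$. The paper states the proposition as a direct consequence of that lemma without writing the chain out. You also supply details the paper leaves implicit: restricting to $q\ge 2$, so that $DS(p+1,q-1)$ is a genuine double star and $q\le p\le n-3$ holds, and checking the degenerate cases $n\le 5$, where both bounds coincide.
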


\begin{lemma}{\rm \cite{DGA}}\label{1ww1}
Let $\Omega$ be a graph of order $n\,(>8)$ with any edge $v_iv_j$. Then
$h(d_i,d_j)<h(n-2,n-2)<h(n-1,n-3)=h(n-2,1)<h(n-1,2)<h(n-1,n-2)<h(n-1,n-1)<h(n-1,1)$
for $(d_i,d_j)\notin \Big\{(n-1,1),\,(n-1,2),\,(n-2,1),\,(n-2,n-2),\,(n-1,n-3),\,(n-1,n-2),\,(n-1,n-1)\Big\}$.
\end{lemma}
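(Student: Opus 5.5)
The plan is to work throughout with $g(a,b)=h(a,b)^2=\frac{a^2+b^2}{a+b-2}$, always writing $a\ge b\ge 1$, and to split the statement into two parts. The first part is the chain among the seven exceptional pairs. The second part shows that every other admissible degree pair lies strictly below $g(n-2,n-2)=\frac{(n-2)^2}{n-3}$.

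For the chain, I would compare consecutive terms by clearing denominators, which reduces each comparison to a polynomial inequality in $n$.
\begin{itemize}
\item The identity $h(n-1,n-3)=h(n-2,1)$ is direct: $\frac{(n-1)^2+(n-3)^2}{2n-6}=\frac{n^2-4n+5}{n-3}=\frac{(n-2)^2+1}{n-3}$.
\item The step $h(n-2,n-2)<h(n-2,1)$ is immediate, since both squares have denominator $n-3$ and the numerators are $(n-2)^2<(n-2)^2+1$.
\item The step $h(n-1,n-1)<h(n-1,1)$ is equally immediate, since both denominators are $n-2$ and the numerators are $(n-1)^2<(n-1)^2+1$.
\item The remaining steps, $h(n-2,1)<h(n-1,2)$, $h(n-1,2)<h(n-1,n-2)$ and $h(n-1,n-2)<h(n-1,n-1)$, each become a low-degree polynomial inequality after cross-multiplying. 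I would check each one for $n\ge 9$ by expanding.
\end{itemize}

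For the general pair, I would first establish two monotonicity facts.
\begin{itemize}
\item Monotonicity in the larger coordinate. The numerator of $\partial g/\partial a$ is $a^2+2ab-4a-b^2=(a-b)(a+b)+2a(b-2)$. This is nonnegative when $a\ge b\ge 2$, so $g$ is nondecreasing in $a$ there. For $b=1$ we have $g=\frac{a^2+1}{a-1}$, which is increasing for $a\ge 3$.
\item Shape in the smaller coordinate. The numerator of $\partial g/\partial b$ is $b^2+2ab-4b-a^2$, which is increasing in $b$. Hence, for fixed $a$, $g$ first decreases and then increases in $b$, so its maximum over any interval of $b$ is attained at an endpoint.
\end{itemize}

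With these facts, a non-exceptional pair falls into one of the following cases.
\begin{itemize}
\item Case $b=1$. Here $a\le n-3$, so $g\le g(n-3,1)=\frac{(n-3)^2+1}{n-4}$.
\item Case $b\ge 2$ and $a\le n-2$. Since $(n-2,n-2)$ is excluded, either $a\le n-3$ or $a=n-2$ with $b\le n-3$. Using monotonicity in $a$ and the endpoint property in $b$, the candidates reduce to $(n-2,2)$, $(n-2,n-3)$ and $(n-3,n-3)$.
\item Case $a=n-1$. The non-excluded range is $3\le b\le n-4$, so the candidates are $(n-1,3)$ and $(n-1,n-4)$.
\end{itemize}
Each candidate is then compared with $\frac{(n-2)^2}{n-3}$ by cross-multiplication.

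I expect the main obstacle to be these endpoint comparisons, because that is where the hypothesis $n>8$ is really used. For example, for the pair $(n-1,3)$ the inequality $\frac{n^2-2n+10}{n}<\frac{(n-2)^2}{n-3}$ reduces to $-n^2+12n-30<0$. This holds for $n=9$ but fails for $n=8$. The other candidates should give similar quadratic or cubic conditions, and I would check each for $n\ge 9$. I would also handle the boundary case $a=b=2$ directly, where $g=4$.
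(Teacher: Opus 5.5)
Your proposal is correct. The paper gives no argument of its own to compare with: it quotes this lemma from earlier work on $ASO$ without proof. Your reduction is therefore a complete self-contained proof. It works with $g=h^2$, uses monotonicity in the larger coordinate for $b\ge 2$ and the decrease-then-increase shape in the smaller coordinate, and then checks finitely many endpoint pairs. The case split is exhaustive.

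The computations you deferred all go through:
\begin{itemize}
\item $g(n-2,1)=n-1+\frac{2}{n-3}<n-1+\frac{4}{n-1}=g(n-1,2)$ for $n>5$.
\item Cross-multiplying gives the positive differences $(n-4)(n-5)$ for $g(n-1,2)<g(n-1,n-2)$ and $n^2-5n+5$ for $g(n-1,n-2)<g(n-1,n-1)$.
\item Compared with $g(n-2,n-2)=n-1+\frac{1}{n-3}$, the candidates $(n-3,1)$, $(n-2,2)$ and $(n-3,n-3)$ give $n-2+\frac{2}{n-4}$, $n-2+\frac{4}{n-2}$ and $n-2+\frac{1}{n-4}$, all smaller.
\item The candidate $(n-2,n-3)$ gives the difference $m^2-3m+1$ with $m=n-2$, which is positive.
\end{itemize}

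One point is worth recording explicitly. The other endpoint $(n-1,n-4)$ yields $(n-2)^2(2n-7)-(2n^2-10n+17)(n-3)=n^2-11n+23$. This is positive for $n\ge 9$ but equals $-1$ at $n=8$. So both ends of the range $3\le b\le n-4$, not only $(n-1,3)$, are where the hypothesis $n>8$ is genuinely needed.

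A minor detail in the case $b=1$: $g(\cdot,1)$ is increasing only from $a=3$, but $g(2,1)=g(3,1)=5$. Hence the bound $g(a,1)\le g(n-3,1)$ also covers $a=2$.
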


\begin{lemma}\label{kh1} Let $T\,(\ncong S_n)$ be a tree of order $n>5$. Then
\begin{itemize}
    \item [\emph{(1)}] For any non-pendant edge $v_iv_j\in E(T)$ satisfying $2\leq d_j\leq d_i$,
$$\sqrt{\frac{d^2_i+d^2_j}{d_i+d_j-2}}\leq \sqrt{n-2+\frac{4}{n-2}}$$
with equality if and only if $d_i=n-2$, $d_j=2$.
\item [\emph{(2)}] For any pendant edge $v_iv_j\in E(T)$ satisfying $1=d_j<d_i$,
$$\sqrt{\frac{d^2_i+d^2_j}{d_i+d_j-2}}\leq \sqrt{n-1+\frac{2}{n-3}}$$
with equality if and only if $d_i=n-2$, $d_j=1$.
\end{itemize}
\end{lemma}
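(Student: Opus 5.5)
The plan is to use the fact that in a tree $T\ncong S_n$ of order $n>5$ the maximum degree satisfies $\Delta\le n-2$. Indeed, if some vertex had degree $n-1$, then $T\cong S_n$. Moreover, two adjacent vertices of large degree constrain each other: for an edge $v_iv_j$ we have $d_i+d_j\le n$, since $T$ has $n-1$ edges and the edge $v_iv_j$ is counted only once among the $d_i+d_j-1$ edges incident to $v_i$ or $v_j$.

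\textbf{Part (1).} Take a non-pendant edge with $2\le d_j\le d_i$, so $d_i+d_j\le n$ and $d_i\le n-2$. Write $s=d_i+d_j$ and $h(d_i,d_j)^2=\frac{d_i^2+d_j^2}{s-2}$.
\begin{itemize}
\item For fixed $s$, the numerator $d_i^2+d_j^2$ is maximized by making the pair as unbalanced as possible subject to $d_j\ge 2$. This gives $(s-2)^2+4$, so
$$h^2\le \frac{(s-2)^2+4}{s-2}=(s-2)+\frac{4}{s-2}.$$
\item The function $x+\frac{4}{x}$ is increasing for $x\ge 2$, and $s-2\le n-2$. Hence $h^2\le n-2+\frac{4}{n-2}$.
\item Equality forces $s=n$ and $d_j=2$, that is, $(d_i,d_j)=(n-2,2)$. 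The boundary case $s-2=2$ only occurs when $d_i=d_j=2$, which gives $h^2=4<n-2+\frac{4}{n-2}$ for $n>5$.
\end{itemize}

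\textbf{Part (2).} For a pendant edge, $h(d_i,1)^2=\frac{d_i^2+1}{d_i-1}=d_i+1+\frac{2}{d_i-1}$. Setting $x=d_i-1$, this equals $x+2+\frac{2}{x}$, which is increasing for $x\ge\sqrt2$, that is, for $d_i\ge 3$.
\begin{itemize}
\item Since $d_i\le n-2$, we get $h^2\le n-1+\frac{2}{n-3}$ whenever $d_i\ge 3$.
\item The case $d_i=2$ gives $h^2=5$, which is strictly smaller than $n-1+\frac{2}{n-3}$ because $n-1\ge 5$ with the positive term added.
\item Equality holds if and only if $d_i=n-2$.
\end{itemize}

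The main care point is Part (1): justifying the constraint $d_i+d_j\le n$ in a tree, and handling the convexity/monotonicity argument when $d_j$ is not $2$. I would verify the claim that the most unbalanced split maximizes the numerator directly: $d_i^2+d_j^2=s^2-2d_id_j$, and $d_id_j$ is minimized at the extreme $d_j=2$.
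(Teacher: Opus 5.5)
Your proof is correct. Part (2) follows the paper's argument, except that you explicitly handle $d_i=2$, where $d\mapsto d+1+\frac{2}{d-1}$ is not monotone (it equals $5$ at both $d=2$ and $d=3$). The paper passes over this point, and it is exactly where the hypothesis $n>5$ is needed: for $n=5$, equality would also occur at $d_i=2$.

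Part (1) follows a genuinely different route. The paper writes $\frac{d_i^2+d_j^2}{d_i+d_j-2}=d_i+2-\frac{d_j(d_i-d_j+2)-4}{d_i+d_j-2}$, where the subtracted term is nonnegative, so $h(d_i,d_j)^2\le d_i+2$. This settles every edge with $d_i\le n-4$ at once. The remaining cases are checked by explicit computation: $d_i=n-3$, where the tree forces $d_j\in\{2,3\}$, and $d_i=n-2$, where it forces $d_j=2$.

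You instead fix $s=d_i+d_j$ and bound the numerator by the most unbalanced split, $(s-2)^2+4$. You then finish with the monotonicity of $x+\frac{4}{x}$ on $[2,\infty)$ together with $s\le n$. Your route gains three things:
\begin{itemize}
\item It avoids the case split.
\item It produces the equality characterization directly, as the conjunction of the two equality conditions $d_j=2$ and $s=n$.
\item It shows that the only structural input is $d_i+d_j\le n$. The hypothesis $T\ncong S_n$ plays no role in Part (1), and the same bound, with the same equality case, holds verbatim for non-pendant edges in any graph whose adjacent degrees sum to at most $n$, for instance any triangle-free graph.
\end{itemize}
The paper's identity gives in return the intermediate bound $h^2\le d_i+2$, which is stronger than needed away from the top degrees. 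Its cost is the separate hand comparisons near $d_i=n-2$.
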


\begin{proof}
Since $T\ncong S_n$, it follows $\Delta(T)\leq n-2$. \\
$(1)$ Let $v_iv_j\in E(T)$ be an 
edge with $2\leq d_j\leq d_i\leq n-2$. Now,
\begin{align}
\frac{d^2_i+d^2_j}{d_i+d_j-2}&=d_i+2-\frac{d_j\,(d_i-d_j+2)-4}{d_i+d_j-2}.\label{1gg1}
\end{align}

\vspace*{2mm}

\noindent
${\bf Case\,1.}$ $d_i\leq n-4$. From (\ref{1gg1}), we obtain
\begin{align}
\frac{d^2_i+d^2_j}{d_i+d_j-2}&\leq n-2-\frac{d_j\,(d_i-d_j+2)-4}{d_i+d_j-2}.\label{1gg2}
\end{align}
Since $d_i\geq d_j\geq 2$, we have $d_j\,(d_i-d_j+2)\geq 4$, and hence
\begin{align}
    \frac{d_j\,(d_i-d_j+2)-4}{d_i+d_j-2}\geq 0.\label{inequality44}
\end{align}
Using inequality (\ref{inequality44}) in (\ref{1gg2}), we obtain
 $$\sqrt{\frac{d^2_i+d^2_j}{d_i+d_j-2}}\leq \sqrt{n-2}<\sqrt{n-2+\frac{4}{n-2}},$$
as required.

\vspace*{2mm}

\noindent
${\bf Case\,2.}$ $d_i=n-3$. Since $T$ is a tree, it follows that $2\leq d_j\leq 3$. 
 Thus, 
\begin{align}
\frac{d^2_i+d^2_j}{d_i+d_j-2}=\frac{(n-3)^2+d^2_j}{n+d_j-5}&\leq \max\left\{\frac{(n-3)^2+4}{n-3},\,\frac{(n-3)^2+9}{n-2}\right\}.\label{1gg3}
\end{align}
Since $n\geq 6$,
$$\frac{(n-3)^2+9}{n-2}<n-2+\frac{4}{n-2}>\frac{(n-3)^2+4}{n-3}.$$
Using the above result in (\ref{1gg3}), we obtain
 $$\sqrt{\frac{d^2_i+d^2_j}{d_i+d_j-2}}<\sqrt{n-2+\frac{4}{n-2}},$$
as required.

\vspace*{2mm}

\noindent
${\bf Case\,3.}$ $d_i=n-2$. Since $T$ is a tree, we have $d_i+d_j\leq n$. If $d_j=2$, then
  $$\sqrt{\frac{d^2_i+d^2_j}{d_i+d_j-2}}=\sqrt{n-2+\frac{4}{n-2}}$$
and hence the equality holds. Otherwise, $d_j\geq 3$. Thus, we have $d_i+d_j\geq n+1$, a contradiction.

\vspace*{3mm}

\noindent
$(2)$ For any edge $v_iv_j\in E(T)$ with $1=d_j<d_i\leq n-2$,
$$\sqrt{\frac{d^2_i+d^2_j}{d_i+d_j-2}}=\sqrt{\frac{d^2_i+1}{d_i-1}}=\sqrt{d_i+1+\frac{2}{d_i-1}}\leq \sqrt{n-1+\frac{2}{n-3}}$$
with equality if and only if $d_i=n-2$, $d_j=1$.

This finalizes the proof for the aforementioned result.
\end{proof}
\begin{corollary}\label{1kh1} Let $T\,(\ncong S_n)$ be a tree of order $n>5$. Then
for any edge $v_iv_j\in E(T)$ with $d_j\leq d_i$,
$$\sqrt{\frac{d^2_i+d^2_j}{d_i+d_j-2}}\leq h(n-2,1)=\sqrt{n-1+\frac{2}{n-3}}$$
with equality if and only if $d_i=n-2$, $d_j=1$.
\end{corollary}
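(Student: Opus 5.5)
The claim to prove is Corollary~\ref{1kh1}: in a tree $T\ncong S_n$ of order $n>5$, every edge has value at most $h(n-2,1)$, with equality only for $(d_i,d_j)=(n-2,1)$. The plan is to derive this directly from Lemma~\ref{kh1}, by splitting the edges of $T$ into pendant and non-pendant ones. The only extra ingredient is a comparison between the two upper bounds that Lemma~\ref{kh1} provides.

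\textbf{Pendant edges.} Take an edge $v_iv_j$ with $d_j=1<d_i$; the case $d_i=d_j=1$ cannot occur since $n>2$ and $T$ is connected. Part (2) of Lemma~\ref{kh1} gives
$$h(d_i,1)\le \sqrt{n-1+\frac{2}{n-3}}=h(n-2,1),$$
with equality if and only if $d_i=n-2$. This is already the statement for pendant edges.

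\textbf{Non-pendant edges.} Take an edge with $2\le d_j\le d_i$. Part (1) of Lemma~\ref{kh1} gives
$$h(d_i,d_j)\le \sqrt{n-2+\frac{4}{n-2}}.$$
It then suffices to show the strict inequality
$$n-2+\frac{4}{n-2}<n-1+\frac{2}{n-3}, \quad\text{i.e.}\quad \frac{4}{n-2}<1+\frac{2}{n-3}.$$
This holds because for $n\ge 6$ we have $\frac{4}{n-2}\le 1$, while the right-hand side is strictly greater than $1$. Consequently every non-pendant edge satisfies $h(d_i,d_j)<h(n-2,1)$ strictly, so no non-pendant edge can attain equality.

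\textbf{Conclusion and equality.} Combining the two cases gives $h(d_i,d_j)\le h(n-2,1)$ for every edge of $T$. Equality forces the edge to be pendant with $d_i=n-2$ and $d_j=1$. Conversely, such an edge gives exactly $h(n-2,1)$. This value can actually occur, for instance in $DS(n-3,1)$, though the corollary does not require it.

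There is no genuine obstacle here. The only step needing care is the elementary comparison of the two bounds, and checking that $n>5$ is exactly enough for $\frac{4}{n-2}\le 1$.
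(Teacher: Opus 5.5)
Your proof is correct and follows the route the paper intends: the paper gives no separate proof of this corollary and treats it as an immediate consequence of Lemma~\ref{kh1}, using part (2) for pendant edges and part (1) together with $h(n-2,2)<h(n-2,1)$ for non-pendant edges, exactly as you do. One small correction to your closing remark: $\frac{4}{n-2}<1+\frac{2}{n-3}$ holds for every $n\ge 4$, so the hypothesis $n>5$ is really used inside Lemma~\ref{kh1} (for example, uniqueness in part (2) fails at $n=5$ because $h(2,1)=h(3,1)$), not in your comparison step.
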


\begin{theorem}\label{theorem2.14}
Let $T\,(\ncong S_n)$ be a tree of order $n$. Then
\begin{align}
ASO(T)\leq (n-3)\sqrt{\frac{(n-2)^2+1}{n-3}}+\sqrt{\frac{(n-2)^2+4}{n-2}}+\sqrt{5}\label{t1}
\end{align}
with equality if and only if $T\cong DS(n-3,1)$.
\end{theorem}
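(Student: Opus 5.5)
The plan is to split on the maximum degree $\Delta$ of $T$. Since $T\ncong S_n$, Lemma~\ref{kh1} and Corollary~\ref{1kh1} apply and $\Delta\le n-2$. The first step is the case $\Delta=n-2$, which should force $T\cong DS(n-3,1)$. Let $v$ have degree $n-2$. Exactly one vertex $w$ lies outside $N[v]$, and $w$ must be a leaf attached to some neighbour $u$ of $v$. Hence $d_u=2$ and every other neighbour of $v$ is a leaf, which is exactly $DS(n-3,1)$. Its edge multiset is $x_{n-2,1}=n-3$, $x_{n-2,2}=1$, $x_{21}=1$, so $ASO$ equals the right-hand side of (\ref{t1}). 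Equality in this case is therefore settled, and it remains to show that $\Delta\le n-3$ gives a \emph{strict} inequality.

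The second step handles small and moderate $\Delta$ by a crude edgewise bound. For $\Delta\le n-3$, repeat the computations of Lemma~\ref{kh1} with $n-2$ replaced by $\Delta$. A pendant edge gives $h(d_i,1)=\sqrt{d_i+1+\frac{2}{d_i-1}}\le \sqrt{\Delta+1+\frac{2}{\Delta-1}}$, and a non-pendant edge satisfies, by (\ref{1gg1}), $h(d_i,d_j)^2\le d_i+2\le \Delta+2$. Thus every edge has $h\le\sqrt{\Delta+2+\frac{2}{\Delta-1}}$, and so
$$ASO(T)\le (n-1)\sqrt{\Delta+2+\tfrac{2}{\Delta-1}}.$$
The target is $(n-3)\sqrt{n-1+\frac{2}{n-3}}+\sqrt{n-2+\frac{4}{n-2}}+\sqrt5$, which is asymptotically $n^{3/2}-\frac52 n^{1/2}+O(1)$. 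The crude bound is asymptotically $n^{3/2}-\frac{n-\Delta-1}{2}\,n^{1/2}+O(1)$. So the crude bound wins once $\Delta\le n-c$ for a small constant $c$, say $\Delta\le n-7$, provided $n$ is large enough. I would verify this by squaring, or by comparing with the explicit concave function $x\mapsto\sqrt{x}$ via the mean value theorem, obtaining an explicit threshold $n_0$.

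The third step, which I expect to be the main obstacle, is the range $n-6\le\Delta\le n-3$ with the edgewise bound replaced by a structural count. Let $v$ have degree $\Delta$. Every other vertex $u$ satisfies $d_u\le n-\Delta\le 6$, because the components of $T-v$ not containing $u$ use up at least $\Delta-1$ vertices. Split $E(T)$ into the $\Delta$ edges at $v$ and the $n-1-\Delta$ remaining edges. Each edge at $v$ has $h(\Delta,d_u)\le h(\Delta,1)$ by the monotonicity in Lemma~\ref{kh1}, and for a neighbour $u$ of degree $\ge2$ one even gets the smaller value $\le\sqrt{\Delta+4/\Delta}$. Each remaining edge has both degrees at most $6$, so $h$ is bounded by an absolute constant $c_0$ ($c_0=\sqrt{6}$ suffices, since $h(6,6)=\sqrt{72/10}$ is the largest and $<3$). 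This gives
$$ASO(T)\le \Delta\sqrt{\Delta+1+\tfrac{2}{\Delta-1}}+(n-1-\Delta)\,c_0.$$
The task is to show this is below the target for each $\Delta\in\{n-6,\dots,n-3\}$, which is a single-variable inequality in $n$. The main loss $(n-3)\sqrt{n-1}-\Delta\sqrt{\Delta+1}\approx \frac{3}{2}(n-3-\Delta)\sqrt n$ dominates the bounded correction for $n$ large. The delicate subcase is $\Delta=n-3$, where the gap must come from the fact that at least two of the three vertices outside $N[v]$ force either a non-leaf neighbour of $v$ or a long path. I would refine this subcase by enumerating the few possible shapes of $T-N[v]$: three vertices hanging from one, two, or three neighbours of $v$. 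Each shape then gives an explicit value to compare.

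Finally, small orders, below the $n_0$ from the second step, would be checked directly. For $n\le 5$ the only non-star trees are $P_4$, $P_5$ and $DS(2,1)$, and $P_4=DS(1,1)$ and $DS(2,1)$ are themselves $DS(n-3,1)$. Lemma~\ref{k1} gives $ASO(P_5)=2\sqrt5+4<ASO(DS(2,1))$. For the remaining $6\le n<n_0$, a finite check over degree sequences with $\Delta\le n-3$ using the same two bounds above should suffice.
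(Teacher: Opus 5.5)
Your argument is correct in outline, but it is organized differently from the paper's. The paper checks $4\le n\le 6$ by computer and then splits on the diameter $d$. For $d=3$ the tree is a double star, and the bound is read off from Proposition~\ref{s1}, which rests on Lemma~\ref{lemma2.3}. For $d\ge 4$ the paper works along a diametral path $v_1v_2\cdots v_{d+1}$: it bounds $v_2v_3$ strictly below $h(n-2,2)$, every other edge by $h(n-2,1)$ via Corollary~\ref{1kh1}, and the pendant edges at $v_2$ and $v_d$ by a one-variable monotonicity argument. You split on $\Delta$ instead. Your observation that $\Delta=n-2$ forces $T\cong DS(n-3,1)$ settles the equality case structurally and makes the double-star comparison unnecessary. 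The bound $d_u\le n-\Delta$ for $u\neq v$ replaces the diametral-path bookkeeping. The price is that $\Delta\le n-3$ is not handled by one uniform estimate. You need a crude edgewise bound for small $\Delta$, a separate numerical inequality for each offset $n-\Delta\in\{3,4,5,6\}$, and a check of small orders.

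Several details need repair.
\begin{itemize}
\item The inequality $h(\Delta,d)\le h(\Delta,1)$ for $1\le d\le\Delta$ is not in Lemma~\ref{kh1}. It holds because $d\mapsto(\Delta^2+d^2)/(\Delta+d-2)$ is convex, so its maximum on $[1,\Delta]$ is attained at an endpoint, and $\Delta^2/(\Delta-1)<(\Delta^2+1)/(\Delta-1)$. Your aside $h(\Delta,d_u)\le\sqrt{\Delta+4/\Delta}$ fails, for instance, when $d_u=\Delta\ge 3$, but you never use it.
\item The parenthetical on $c_0$ is wrong. We have $h(6,6)=\sqrt{7.2}>\sqrt6$, and the largest value with both degrees at most $6$ is $h(6,1)=\sqrt{37/5}$. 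Take instead $c_0=\max\{h(a,b):1\le b\le a\le\min\{\Delta,n-\Delta\}\}$.
\item With this $c_0$, the case $\Delta=n-3$ is not delicate and needs no shape enumeration. Every vertex other than $v$ has degree at most $3$, so $c_0=\sqrt5$, and
\[
\bigl[(n-3)h(n-2,1)+h(n-2,2)+\sqrt5\bigr]-\bigl[(n-3)h(n-3,1)+2\sqrt5\bigr]=(n-3)\bigl(h(n-2,1)-h(n-3,1)\bigr)+h(n-2,2)-\sqrt5>0
\]
for $n\ge 6$. This holds because $h(n-2,1)^2-h(n-3,1)^2=1-\frac{2}{(n-3)(n-4)}>0$ and $h(n-2,2)^2=n-2+\frac{4}{n-2}\ge 5$.
\item Your heuristic missed this because it dropped the target's term $h(n-2,2)\approx\sqrt n$ and the $\tfrac12\sqrt n$ coming from $\sqrt{n-1}$ versus $\sqrt{n-2}$. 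Similarly, the crude bound is $n^{3/2}-\frac{n-\Delta}{2}n^{1/2}+O(1)$, not with $n-\Delta-1$, so asymptotically $\Delta\le n-6$ is already enough in that step.
\item With an absolute constant such as $c_0=3$, the $\Delta=n-3$ estimate actually fails for $6\le n\le 9$. Your small-order check ``with the same two bounds'' therefore works only with the refined $c_0$. With it, the offsets $4,5,6$ follow from $(n-3)h(n-2,1)-\Delta h(\Delta,1)\ge(n-\Delta-3)h(n-2,1)$ and a short direct check, with no large threshold $n_0$.
\end{itemize}
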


\begin{proof}
For $4\leq n\leq 6$, utilizing the Sage mathematical software \cite{ST}, the validity of the result (\ref{t1}) can be readily verified. Otherwise, $n\geq 7$. Since $T\ncong S_n$, it follows $\Delta\leq n-2$. Let $d$ be the diameter of tree $T$. Then $d\geq 3$.

\vspace*{1mm}

First we assume that $d=3$. Then $T\cong DS(p,q)$, where $p+q=n-2,\,p\geq q\geq 1$. By Proposition \ref{s1}, the inequality presented in (\ref{t1}) is established. Furthermore, equality in (\ref{t1}) is attained if and only if $T\cong DS(n-3,1)$.

\vspace*{1mm}

Next we assume that $d\geq 4$. Let $P_{d+1}:\,v_1v_2\ldots v_dv_{d+1}$ denote  a diametral path in tree $T$. By virtue of Lemma \ref{kh1}, for any non-pendant edge $v_iv_j\in E(T)$ satisfying $2\leq d_j\leq d_i\leq n-2$, we obtain
\begin{align}
\sqrt{\frac{d^2_i+d^2_j}{d_i+d_j-2}}\leq h(n-2,2)=\sqrt{n-2+\frac{4}{n-2}}\label{1mm11}
\end{align}
with equality if and only if $d_i=n-2$, $d_j=2$. For $v_2v_3\in E(T)$, we have $2\leq d_2\leq n-3$ and $d_3\geq 2$. Using (\ref{1mm11}), we obtain
\begin{align}
h(d_2,d_3)=\sqrt{\frac{d^2_2+d^2_3}{d_2+d_3-2}}<h(n-2,2)=\sqrt{n-2+\frac{4}{n-2}}.\label{mm11}
\end{align}

\vspace*{3mm}

Without loss of generality, we can assume that $d_2\geq d_d$. Let $d_2=a+1$ and $d_d=b+1$, where $a\geq b$. It is straightforward to observe $a+b\leq n-3$ and $1\leq b\leq \frac{n-3}{2}$. We now analyze the following cases:

\vspace*{3mm}

\noindent
${\bf Case\,1.}$ $b=1$. In this scenario, for $v_dv_{d+1}\in E(T)$, $h(2,1)=\sqrt{5}$. Let $S_1=\{v_2v_3,\,v_dv_{d+1}\}$. Using Corollary \ref{1kh1} with (\ref{mm11}), we obtain
\begin{align*}
ASO(T)&=\sum\limits_{v_iv_j\in E(T)}\,h(d_i.d_j)\nonumber\\[2mm]
&=h(2,1)+h(d_2,d_3)+\sum\limits_{v_iv_j\in E(T)\backslash S_1}\,\sqrt{\frac{d^2_i+d^2_j}{d_i+d_j-2}}\\[2mm]
&<\sqrt{5}+h(n-2,2)+(n-3)\,h(n-2,1)\\[2mm]
&\leq (n-3)\sqrt{\frac{(n-2)^2+1}{n-3}}+\sqrt{\frac{(n-2)^2+4}{n-2}}+\sqrt{5}.
\end{align*}
The inequality (\ref{t1}) strictly holds.

\vspace*{2mm}

\noindent
${\bf Case\,2.}$ $2\leq b\leq \frac{n-3}{2}$. In this scenario, 
\begin{align}
\sum\limits_{v_j:v_2v_j\in E(T),\atop j\neq 3}\,\sqrt{\frac{d^2_2+d^2_j}{d_2+d_j-2}}&=a\,\sqrt{\frac{(a+1)^2+1}{a}}=a\,\sqrt{a+2+\frac{2}{a}}\label{inequality99}
\end{align}
and
\begin{align}
\sum\limits_{v_j:v_dv_j\in E(T),\atop j\neq d-1}\,\sqrt{\frac{d^2_d+d^2_j}{d_d+d_j-2}}&=b\,\sqrt{\frac{(b+1)^2+1}{b}}=b\,\sqrt{b+2+\frac{2}{b}}.\label{inequality1010}
\end{align}
Let $a+b=k$. Then we have $4\leq k\leq n-3$. From the above inequalities (\ref{inequality99}) and (\ref{inequality1010}), we obtain
\begin{align}
&\sum\limits_{v_j:v_2v_j\in E(T),\atop j\neq 3}\,\sqrt{\frac{d^2_2+d^2_j}{d_2+d_j-2}}+\sum\limits_{v_j:v_dv_j\in E(T),\atop j\neq d-1}\,\sqrt{\frac{d^2_d+d^2_j}{d_d+d_j-2}}\nonumber\\[3mm]
=&~a\,\sqrt{a+2+\frac{2}{a}}+b\,\sqrt{b+2+\frac{2}{b}}\nonumber\\[3mm]
=&~ (k-b)\,\sqrt{k-b+2+\frac{2}{k-b}}+b\,\sqrt{b+2+\frac{2}{b}}.\label{1t1}
\end{align}

We now define a function
 $$f(x)=(k-x)\,\sqrt{k-x+2+\frac{2}{k-x}}+x\,\sqrt{x+2+\frac{2}{x}},~2\leq x\leq \frac{k}{2}.$$
For $2\leq x\leq \frac{k}{2}$, we have $k-2\,x\geq 0$. Therefore
\begin{align}
    (x+2)\,(k-x)=x\,(k-x+2)+2\,(k-2\,x)\geq x\,(k-x+2).\label{inequality9}
\end{align}
Moreover, we have $k-x\geq x\geq 2$. Thus we obtain
$$(k-2\,x)\,\left(1-\frac{2}{x\,(k-x)}\right)\geq 0,~\mbox{ that is, }~(k-2\,x)+2\,\left(\frac{1}{k-x}-\frac{1}{x}\right)\geq 0,$$
which means
\begin{align}
\sqrt{k-x+2+\frac{2}{k-x}}\geq \sqrt{x+2+\frac{2}{x}}.\label{1ws1}
\end{align}
Using the above inequalities (\ref{inequality9})  and (\ref{1ws1}), we obtain
\begin{align}
(x+2)\,(k-x)\,\sqrt{k-x+2+\frac{2}{k-x}}\geq x\,(k-x+2)\,\sqrt{x+2+\frac{2}{x}}.\nonumber
\end{align}
Using the above result, we obtain
\begin{align}
&~~~~\frac{k-x+2}{(k-x)\,\sqrt{k-x+2+\dfrac{2}{k-x}}}-\frac{x+2}{x\,\sqrt{x+2+\dfrac{2}{x}}}\nonumber\\[3mm]
&=\frac{x\,(k-x+2)\,\sqrt{x+2+\dfrac{2}{x}}-(x+2)\,(k-x)\,\sqrt{k-x+2+\dfrac{2}{k-x}}}{x\,(k-x)\,\sqrt{k-x+2+\dfrac{2}{k-x}}\,\sqrt{x+2+\dfrac{2}{x}}}~\leq~ 0.\label{1ws2}
\end{align}
Using (\ref{1ws1}) and (\ref{1ws2}), we obtain
\begin{align}
&-\frac{(k-x)\,\left(1-\displaystyle{\frac{2}{(k-x)^2}}\right)}{2\,\sqrt{k-x+2+\displaystyle{\frac{2}{k-x}}}}+\frac{x\,\left(1-\displaystyle{\frac{2}{x^2}}\right)}{2\,\sqrt{x+2+\displaystyle{\frac{2}{x}}}}\nonumber\\[2mm]
=&-\frac{k-x+2+\displaystyle{\frac{2}{k-x}}-2-\displaystyle{\frac{4}{k-x}}}{2\,\sqrt{k-x+2+\displaystyle{\frac{2}{k-x}}}}+\frac{x+2+\displaystyle{\frac{2}{x}}-2-\displaystyle{\frac{4}{x}}}{2\,\sqrt{x+2+\displaystyle{\frac{2}{x}}}}\nonumber\\[2mm]
=&-\frac{1}{2}\,\sqrt{k-x+2+\frac{2}{k-x}}+\frac{1+\displaystyle{\frac{2}{k-x}}}{\sqrt{k-x+2+\displaystyle{\frac{2}{k-x}}}}+\frac{1}{2}\,\sqrt{x+2+\frac{2}{x}}-\frac{1+\displaystyle{\frac{2}{x}}}{\sqrt{x+2+\displaystyle{\frac{2}{x}}}}\nonumber\\[2mm]
=&-\frac{1}{2}\,\sqrt{k-x+2+\frac{2}{k-x}}+\frac{1}{2}\,\sqrt{x+2+\frac{2}{x}}+\frac{k-x+2}{(k-x)\,\sqrt{k-x+2+\dfrac{2}{k-x}}}-\frac{x+2}{x\,\sqrt{x+2+\dfrac{2}{x}}}\nonumber\\[2mm]
\leq &-\frac{1}{2}\,\sqrt{k-x+2+\frac{2}{k-x}}+\frac{1}{2}\,\sqrt{x+2+\frac{2}{x}}~\leq~ 0.\label{1ws3}
\end{align}
Then we have
\begin{align*}
f'(x)&=-\sqrt{k-x+2+\frac{2}{k-x}}-\frac{(k-x)\,\left(1-\displaystyle{\frac{2}{(k-x)^2}}\right)}{2\,\sqrt{k-x+2+\displaystyle{\frac{2}{k-x}}}}+\sqrt{x+2+\frac{2}{x}}+\frac{x\,\left(1-\displaystyle{\frac{2}{x^2}}\right)}{2\,\sqrt{x+2+\displaystyle{\frac{2}{x}}}}\leq 0\nonumber
\end{align*}
by (\ref{1ws1}) and (\ref{1ws3}). Hence, from (\ref{1t1}) we obtain
\begin{align}
   \sum\limits_{v_j:v_2v_j\in E(T),\atop j\neq 3}\,\sqrt{\frac{d^2_2+d^2_j}{d_2+d_j-2}}+\sum\limits_{v_j:v_dv_j\in E(T),\atop j\neq d-1}\,\sqrt{\frac{d^2_d+d^2_j}{d_d+d_j-2}}\leq f(2)=(k-2)\,\sqrt{k+\frac{2}{k-2}}+2\,\sqrt{5}.\label{inequality17}
\end{align}

\vspace*{2mm}

Let $S_3=\{v_2v_j\in E(T):\,j\neq 3\}\cup \{v_2v_3\}\cup \{v_dv_j\in E(T):\,j\neq d-1\}$. Using the above inequality (\ref{inequality17}) with (\ref{mm11}) and Corollary \ref{1kh1}, we obtain
\begin{align*}
ASO(T)&=\sum_{v_iv_j\in E(T)}h(d_i,d_j)\nonumber\\[2mm]
&=\sum\limits_{v_j:v_2v_j\in E(T),\atop j\neq 3}\,\sqrt{\frac{d^2_2+d^2_j}{d_2+d_j-2}}+\sum\limits_{v_j:v_dv_j\in E(T),\atop j\neq d-1}\,\sqrt{\frac{d^2_d+d^2_j}{d_d+d_j-2}}+\sum\limits_{v_2v_3\in E(T)}\,\sqrt{\frac{d^2_2+d^2_3}{d_2+d_3-2}}\\[2mm]
&~~~~~~~~~~~~~~~~~~~~~~~~~~~+\sum\limits_{v_iv_j\in E(T)\backslash S_3}\,\sqrt{\frac{d^2_i+d^2_j}{d_i+d_j-2}}\\[2mm]
&< (k-2)\,\sqrt{k+\frac{2}{k-2}}+2\,\sqrt{5}+\sqrt{\frac{(n-2)^2+4}{n-2}}+(n-k-2)\,\sqrt{\frac{(n-2)^2+1}{n-3}}\\[2mm]
&<(n-3)\sqrt{\frac{(n-2)^2+1}{n-3}}+\sqrt{\frac{(n-2)^2+4}{n-2}}+\sqrt{5},
\end{align*}
as $4\leq k\leq n-3$ and
$$\sqrt{5}\leq \sqrt{k+\frac{2}{k-2}}<\sqrt{\frac{(n-2)^2+1}{n-3}}.$$
This completes the proof of the theorem.
\end{proof}

As a preliminary definition, we recall that a chemical graph is connected and simple such that no vertex has a degree greater than four. We further denote $\mathcal{CG}_n$  as the class of chemical graphs of order $n$ and $\mathcal{CT}_n$ as the class of chemical trees of order $n$.

\vspace*{1mm}

For any graph $\Omega$ with order $n$, let $n_i$ denote the number of vertices of degree $i$. Then the following system of equations holds:
   $$\left\{\begin{aligned}
  &~ n_1+n_2+n_3+n_4+\cdots+n_{n-1}=n,\nonumber\\
    &~2\,x_{11}+x_{12}+x_{13}+\cdots+x_{1,n-1}=n_1,\nonumber\\
    &~x_{12}+2\,x_{22}+x_{23}+\cdots+x_{2,n-1}=2\,n_2,\nonumber\\
    &~x_{13}+x_{23}+2\,x_{33}+\cdots+x_{3,n-1}=3\,n_3,\nonumber\\
   &~~~~~~~~\cdots\cdots\cdots\cdots\nonumber\\
    &~x_{1,n-1}+x_{2,n-1}+\cdots+2\,x_{n-1,n-1}=(n-1)\,n_{n-1}.\nonumber
\end{aligned}
\right.$$
From the above $n$ equations, we obtain
\begin{eqnarray}
n=\sum_{1\leq i\leq j\leq n-1}\left(\frac{1}{i}+\frac{1}{j}\right)x_{ij}.\nonumber
\end{eqnarray}

For every $T\in \mathcal{CT}_n$ of order $n\geq 3$, it is evident that $x_{11}(T)=0$. Write
$$A=\Big\{(i,j):1\leq i\leq j\leq 4\Big\},~A_1=A\setminus\Big\{(1,4),\,(4,4),\,(1,1)\Big\}.$$
Then for chemical trees, this leads to
\begin{eqnarray}
 \frac{5}{4}\,x_{14}+\frac{1}{2}\,x_{44}+\sum_{(i,j)\in A_1}\left(\frac{1}{i}+\frac{1}{j}\right)x_{ij}=n,~~ x_{14}+x_{44}+\sum_{(i,j)\in A_1}x_{ij}=n-1.\nonumber
\end{eqnarray}
Thus, we have the expressions for $x_{14}$ and $x_{44}$ as follows
\begin{eqnarray}
x_{14}=\frac{2\,n}{3}+\frac{2}{3}-\frac{4}{3}\sum_{(i,j)\in
A_1}\left(\frac{1}{i}+\frac{1}{j}-\frac{1}{2}\right)x_{ij},~x_{44}=\frac{n}{3}-\frac{5}{3}+\frac{4}{3}\sum_{(i,j)\in
A_1}\left(\frac{1}{i}+\frac{1}{j}-\frac{5}{4}\right)x_{ij}.\nonumber
\end{eqnarray}

We define
\begin{align}
\phi_1(i,j)=\frac{4\,\sqrt{3}}{3}\left(\frac{1}{i}+\frac{1}{j}-\frac{5}{4}\right)-\frac{\sqrt{51}}{3}\left(\frac{1}{i}+\frac{1}{j}-\frac{1}{2}\right)+\frac{3}{4}\sqrt{\frac{i^2+j^2}{i+j-2}}~\mbox{ for any }(i,j)\in A_1.\nonumber
\end{align}
We now introduce three sets as follows
\begin{align}
\Gamma_1&=\Big\{T\in\mathcal{CT}_n: n_2=n_3=0\Big\},\,\Gamma_2=\Big\{T\in\mathcal{CT}_n: n_2=1,\,n_3=0\Big\},\,\Gamma_3=\Big\{T\in\mathcal{CT}_n:n_2=0,\,n_3=1\Big\}.\nonumber
\end{align}
With these sets in place, we can now state the following theorem, which determines the maximum value of $ASO$ index for chemical trees.
\begin{theorem} \label{th3.1}
For every tree $T\in \mathcal{CT}_n~(n\geq 5)$,
 $$ASO(T)\leq \frac{2\,\sqrt{51}}{9}\,(n+1)+\frac{4\,\sqrt{3}}{9}\,(n-5)$$
 with equality if and only if $T\in\Gamma_1$.
\end{theorem}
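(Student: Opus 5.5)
Substitute the expressions for $x_{14}$ and $x_{44}$ into $ASO(T)=\sum_{(i,j)}x_{ij}h(i,j)$. We have $h(1,4)=\sqrt{17/3}=\sqrt{51}/3$ and $h(4,4)=\sqrt{32/6}=4/\sqrt{3}=4\sqrt{3}/3$. Hence
\begin{align*}
ASO(T)&=\frac{\sqrt{51}}{3}\Big(\frac{2n+2}{3}\Big)+\frac{4\sqrt{3}}{3}\Big(\frac{n-5}{3}\Big)\\
&\quad+\sum_{(i,j)\in A_1}\Big[h(i,j)-\frac{4\sqrt{51}}{9}\Big(\frac1i+\frac1j-\frac12\Big)+\frac{16\sqrt{3}}{9}\Big(\frac1i+\frac1j-\frac54\Big)\Big]x_{ij}.
\end{align*}
The constant part is exactly $\frac{2\sqrt{51}}{9}(n+1)+\frac{4\sqrt{3}}{9}(n-5)$. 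The bracket equals $\frac43\phi_1(i,j)$ with $\phi_1$ as defined in the paper. So the theorem reduces to proving
$$\sum_{(i,j)\in A_1}\phi_1(i,j)\,x_{ij}\le 0,$$
with equality iff $x_{ij}=0$ for all $(i,j)\in A_1$.

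The key step is a finite check that $\phi_1(i,j)<0$ for every pair in $A_1$. The relevant pairs are $(1,2),(1,3),(2,2),(2,3),(2,4),(3,3),(3,4)$; $(1,1)$ is excluded since $x_{11}=0$. For each pair I would compute $h(i,j)$ and the two linear terms numerically.

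Take $(3,4)$ as an example. Here $\frac1i+\frac1j=\frac{7}{12}$, and $h=\sqrt{5}\approx2.236$. The first term is $\frac{4\sqrt3}{3}(-\frac23)\approx-1.540$, the second is $-\frac{\sqrt{51}}{3}\cdot\frac{1}{12}\approx-0.198$, and the third is $\frac34\cdot2.236\approx1.677$. The sum is about $-0.061<0$. I expect this pair to be the tightest, and so the main obstacle, since it needs a careful numerical estimate (or an exact comparison after squaring).

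The other pairs should be comfortably negative. For $(2,4)$: $h=\sqrt5$, $\frac1i+\frac1j=\frac34$, giving $-1.155-0.595+1.677\approx-0.07$; this one is also tight and needs care. For $(3,3)$: $h=\sqrt{18/4}\approx2.121$, giving $-1.283-0.397+1.591<0$. For $(2,2)$: $-0.577-1.190+1.5<0$. The pairs with a degree-$1$ vertex have large positive $\frac1i+\frac1j$, which makes the $\sqrt3$ term positive. For example, $(1,2)$ gives $+0.577-2.38+1.677<0$, and $(1,3)$ with $h=\sqrt5$ gives $+0.192-1.785+1.677\approx+0.08$.

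The $(1,3)$ value is positive, so I must recheck it carefully. If it really is positive, the pure pointwise argument fails. In that case I would pair each $(1,3)$ edge with the other edges at its degree-$3$ neighbour: a degree-$3$ vertex in a chemical tree with $n\ge5$ has at least one non-pendant neighbour, giving an edge of type $(2,3),(3,3)$ or $(3,4)$. I would then show that the combined contribution per degree-$3$ vertex is negative, for instance by bounding $x_{13}\le 2n_3$ together with the counts of the $(3,\cdot)$ non-pendant edges. The $(2,3)$ check goes the same way and should be negative.

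Once the negativity is settled, equality forces $x_{ij}=0$ on $A_1$. This means there are no degree-$2$ or degree-$3$ vertices, i.e.\ $T\in\Gamma_1$. Conversely, trees in $\Gamma_1$ attain the bound because the sum over $A_1$ vanishes.
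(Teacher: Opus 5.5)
Your reduction is the paper's proof: the same substitution for $x_{14},x_{44}$, the same $\phi_1$, and the same equality argument. However, your finite check has an arithmetic error at $(1,3)$, and that error leaves your proof unfinished as written. For $(1,3)$ we have $\frac1i+\frac1j-\frac12=\frac43-\frac12=\frac56$, not $\frac34$. So the middle term is $-\frac{5\sqrt{51}}{18}\approx-1.984$, not $-1.785$, and
\[
\phi_1(1,3)=\frac{\sqrt3}{9}-\frac{5\sqrt{51}}{18}+\frac{3\sqrt5}{4}\approx 0.192-1.984+1.677\approx-0.114<0 .
\]
With this correction all seven values are negative. For $(1,2),(1,3),(2,2),(2,3),(2,4),(3,3),(3,4)$ they are approximately $-0.126,-0.114,-0.268,-0.194,-0.073,-0.153,-0.061$. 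The pointwise argument therefore goes through exactly as in the paper. A smaller slip: the first term at $(3,3)$ is $-\frac{7\sqrt3}{9}\approx-1.347$, not $-1.283$. This does not change the sign. You also still need to compute $(2,3)$ rather than assert it.

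You should drop the pairing fallback, because it could not have worked. Your displayed formula for $ASO(T)$ is an exact identity, so a positive $\phi_1(1,3)$ would make the theorem false. Take the $7$-vertex chemical tree formed by a degree-$4$ vertex with three leaves, joined to a degree-$3$ vertex with two leaves. For it, $\sum_{A_1}\phi_1(i,j)x_{ij}=2\phi_1(1,3)+\phi_1(3,4)$. With your value $+0.08$ this would be about $+0.099>0$, and no regrouping of edges can change that total. A positive entry should therefore have signalled a computational slip, not the need for a more elaborate argument.

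For full rigour, the two tightest cases can be certified exactly:
\[
\phi_1(3,4)<0 \iff 27\sqrt5<32\sqrt3+\sqrt{51}, \qquad \phi_1(2,4)<0 \iff 9\sqrt5<8\sqrt3+\sqrt{51},
\]
and both inequalities hold.
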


\begin{proof} By virtue of the definition of $ASO$,
    \begin{align}
       & ASO(T)=\sum_{(i,j)\in A}x_{ij}h(i,j)=\frac{\sqrt{51}}{3}\,x_{14}+\frac{4\,\sqrt{3}}{3}\,x_{44}+\sum_{(i,j)\in A_1}x_{ij}h(i,j)\nonumber\\[2mm]
        &=\frac{\sqrt{51}}{3}\left[\frac{2\,n}{3}+\frac{2}{3}-\frac{4}{3}\sum_{(i,j)\in
A_1}\left(\frac{1}{i}+\frac{1}{j}-\frac{1}{2}\right)x_{ij}\right]+\frac{4\,\sqrt{3}}{3}\left[\frac{n}{3}-\frac{5}{3}+\frac{4}{3}\sum_{(i,j)\in A_1}\left(\frac{1}{i}+\frac{1}{j}-\frac{5}{4}\right)x_{ij}\right]\nonumber\\[2mm]
&~~~~~~~~~~~~~~~~~~~~~~~~~~~+\sum_{(i,j)\in A_1}x_{ij}\sqrt{\frac{i^2+j^2}{i+j-2}}\nonumber\\[2mm]
&=\frac{\sqrt{51}}{3}\cdot\frac{2\,n+2}{3}+\frac{4\,\sqrt{3}}{3}\cdot\frac{n-5}{3}+\frac{4}{3}\,\sum_{(i,j)\in A_1}\left[\frac{4\,\sqrt{3}}{3}\left(\frac{1}{i}+\frac{1}{j}-\frac{5}{4}\right)-\frac{\sqrt{51}}{3}\left(\frac{1}{i}+\frac{1}{j}-\frac{1}{2}\right)+\frac{3}{4}\sqrt{\frac{i^2+j^2}{i+j-2}}\right]x_{ij}\nonumber\\[2mm]
&=\frac{2\,\sqrt{51}}{9}\,(n+1)+\frac{4\,\sqrt{3}}{9}\,(n-5)+\frac{4}{3}\,\sum_{(i,j)\in A_1}\,\phi_1(i,j)x_{ij}\label{1kinkar1}\\[2mm]
&\leq \frac{2\,\sqrt{51}}{9}\,(n+1)+\frac{4\,\sqrt{3}}{9}\,(n-5)\nonumber
\end{align}
since for any $(i,j)\in A_1$,
$$\phi_1(i,j)=\frac{4\,\sqrt{3}}{3}\left(\frac{1}{i}+\frac{1}{j}-\frac{5}{4}\right)-\frac{\sqrt{51}}{3}\left(\frac{1}{i}+\frac{1}{j}-\frac{1}{2}\right)+\frac{3}{4}\sqrt{\frac{i^2+j^2}{i+j-2}}<0,$$
which can be seen in Table \ref{table1}.
\begin{table}[h!]
\begin{center}
\begin{tabular}{|c|c|}
\hline
$(i,j)$ & $\phi_1(i,j)$\\
\hline
$(1,2)$ & $\frac{\sqrt{3}}{3}-\frac{\sqrt{51}}{3}+\frac{3\,\sqrt{5}}{4}\approx -0.126$ \\
\hline
$(1,3)$ &  $\frac{\sqrt{3}}{9}-\frac{5\,\sqrt{51}}{18}+\frac{3\,\sqrt{5}}{4}\approx -0.114$  \\
\hline
$(2,2)$ & $-\frac{\sqrt{3}}{3}-\frac{\sqrt{51}}{6}+\frac{3}{2}\approx -0.268$\\
\hline
$(2,3)$ & $-\frac{5\,\sqrt{3}}{9}-\frac{\sqrt{51}}{9}+\frac{\sqrt{39}}{4}\approx -0.194$ \\
\hline
$(2,4)$ & $-\frac{2\,\sqrt{3}}{3}-\frac{\sqrt{51}}{12}+\frac{3\,\sqrt{5}}{4}\approx -0.073$ \\
\hline
$(3,3)$ & $\frac{7\,\sqrt{3}}{9}-\frac{\sqrt{51}}{18}+\frac{9\,\sqrt{2}}{8}\approx -0.153$ \\
\hline
$(3,4)$ & $ -\frac{8\,\sqrt{3}}{9}-\frac{\sqrt{51}}{36}+\frac{3\,\sqrt{5}}{4}\approx -0.061$\\
\hline
\end{tabular}
\caption{Approximate values of function $\phi_1(i,j)$ on $A_1$}\label{table1}
\end{center}
\end{table}
Then the equality holds if and only if $x_{ij}=0$ for all $(i, j)\in A_1$, that is, if and only if $x_{12}=x_{13}=x_{22}=x_{23}=x_{24}=x_{13}=x_{33}=x_{34}=0$, that is, if and only if $n_2=n_3=0$, that is, if and only if $T\in\Gamma_1$.
\end{proof}
\begin{corollary}
     Let $n$ be an integer with $n\geq 5
     $ and $n\equiv 2\pmod 3$. If $T$ has maximum $ASO$-value in $\mathcal{CT}_n$, then $T\in\Gamma_1$.
\end{corollary}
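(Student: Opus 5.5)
Our plan is to show that $\Gamma_1$ is nonempty exactly when $n\equiv 2\pmod 3$, and then to apply Theorem~\ref{th3.1}. For the nonemptiness, take a tree in which every vertex has degree $1$ or $4$. Then $n_1+n_4=n$ and the handshake identity gives $n_1+4n_4=2(n-1)$. Subtracting, $3n_4=n-2$, so $n\equiv 2\pmod 3$ is necessary. Conversely, for $n\equiv 2\pmod 3$ with $n\ge 5$, put $n_4=(n-2)/3\ge 1$. Take a path on $n_4$ vertices and attach pendant vertices to each of its vertices until all of them have degree $4$. The endpoints receive $3$ pendants each (or $4$ if $n_4=1$), and the interior vertices receive $2$ each. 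The total number of vertices is then $n_4+(2n_4+2)=3n_4+2=n$. This is a chemical tree of order $n$ with $n_2=n_3=0$, so $\Gamma_1\neq\emptyset$.

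Now let $T$ have maximum $ASO$-value in $\mathcal{CT}_n$, and pick any $T_0\in\Gamma_1$. By the equality case of Theorem~\ref{th3.1}, $ASO(T_0)$ equals the upper bound $\frac{2\sqrt{51}}{9}(n+1)+\frac{4\sqrt{3}}{9}(n-5)$. Since $T$ is a maximizer, $ASO(T)\ge ASO(T_0)$. Theorem~\ref{th3.1} also gives $ASO(T)$ at most that same bound, so $ASO(T)$ attains it. The equality characterization in Theorem~\ref{th3.1} then forces $T\in\Gamma_1$.

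There is no real obstacle here. The only point needing care is the explicit construction for every admissible $n$, including the degenerate case $n=5$ (where $n_4=1$ and the tree is the star $S_5$), so that the bound is genuinely attained.
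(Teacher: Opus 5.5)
Your proof is correct and follows the paper's route: a maximizer must attain the bound of Theorem~\ref{th3.1}, and the equality case of that theorem forces $T\in\Gamma_1$. The one addition is that you construct an explicit member of $\Gamma_1$ (the caterpillar whose $n_4=(n-2)/3$ spine vertices all have degree $4$), which shows the bound is actually attained when $n\equiv 2\pmod 3$; the paper's one-line proof takes this step for granted.
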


\begin{proof}
   Let $T$ be a tree in $\mathcal{CT}_n$ that attains the maximum $ASO$-value. Then by Theorem \ref{th3.1}, we have $ASO(T)=\frac{2\,\sqrt{51}}{9}\,(n+1)+\frac{4\,\sqrt{3}}{9}\,(n-5)$. Moreover, $x_{ij}=0$ for any $(i,j)\in A_1$, that is, $n_2=n_3=0$, meaning that  $T\in\Gamma_1$.
\end{proof}

The maximal chemical trees with respect to $ASO$ for $n=4,7,10$ are easily obtained by using Sage~\cite{ST}, as shown in Figure \ref{6}.
\begin{figure}[h]
\begin{center}
\begin{tikzpicture}
\clip(-1.5,-2.5) rectangle (16,1.8);
\draw [line width=0.75pt] (0,0)--(1,0);
 \draw [line width=0.75pt] (1,0)--(2,0.8);
 \draw [line width=0.75pt] (1,0)--(2,-0.8);

\draw [line width=0.75pt] (4,0)--(7,0);
 \draw [line width=0.75pt] (5,0)--(5,0.8);

 \draw [line width=0.75pt] (6,0)--(6,0.8);
 \draw [line width=0.75pt] (6,0)--(6,-0.8);

\draw [line width=0.75pt] (9,0)--(13,0);
\draw [line width=0.75pt] (10,0)--(10,0.8);
\draw [line width=0.75pt] (10,0)--(10,-0.8);
\draw [line width=0.75pt] (11,0)--(11,0.8);
\draw [line width=0.75pt] (12,0)--(12,0.8);
\draw [line width=0.75pt] (12,0)--(12,-0.8);


 \draw (0.75,-1.5) node [anchor=north west]{$H_1$};
\draw (5.25,-1.5) node [anchor=north west]{$H_2$};
\draw (10.75,-1.5) node [anchor=north west]{$H_3$};

\begin{scriptsize}

\draw [fill=black] (0,0) circle (1.5pt);
 \draw [fill=black] (2,0.8) circle (1.5pt);
\draw [fill=black] (1,0) circle (1.5pt);
\draw [fill=black] (2,-0.8) circle (1.5pt);

\draw [fill=black] (4,0) circle (1.5pt);
\draw [fill=black] (5,0) circle (1.5pt);

\draw [fill=black] (6,0) circle (1.5pt);
\draw [fill=black] (7,0) circle (1.5pt);
 \draw [fill=black] (5,0.8) circle (1.5pt);
\draw [fill=black] (6,-0.8) circle (1.5pt);
\draw [fill=black] (6,0.8) circle (1.5pt);

\draw [fill=black] (9,0) circle (1.5pt);
\draw [fill=black] (10,0) circle (1.5pt);
\draw [fill=black] (11,0) circle (1.5pt);
\draw [fill=black] (12,0) circle (1.5pt);
\draw [fill=black] (13,0) circle (1.5pt);

\draw [fill=black] (12,0.8) circle (1.5pt);
\draw [fill=black] (10,0.8) circle (1.5pt);
\draw [fill=black] (11,0.8) circle (1.5pt);
\draw [fill=black] (10,-0.8) circle (1.5pt);
\draw [fill=black] (12,-0.8) circle (1.5pt);

\end{scriptsize}
\end{tikzpicture}
\caption{Chemical trees $H_1$, $H_2$ and $H_3$.}\label{6}
\end{center}
\end{figure}

Let $\Gamma'_3$ denote the subclass of chemical trees in $\Gamma_3$ that contains exactly one vertex of degree three, which is adjacent only to vertices of degree four, that is, $x_{14}=\frac{2\,n+1}{3}$, $x_{34}=3$, $x_{44}=\frac{n-13}{3}$, and $x_{11}=x_{12}=x_{13}=x_{22}=x_{23}=x_{24}=x_{33}=0$. For $T\in \Gamma'_3$, we have
\begin{align}
ASO(T)=\frac{\sqrt{51}}{9}\,(2\,n+1)+\frac{4\,\sqrt{3}}{9}\,(n-13)+3\,\sqrt{5}.\nonumber
\end{align}

\begin{theorem}\label{theorem2.7}
Let $n$ be an integer with $n\geq 13$ and $n\equiv 1\pmod 3$. Then
\begin{align}
ASO(T)\leq \frac{\sqrt{51}}{9}(2\,n+1)+\frac{4\,\sqrt{3}}{9}(n-13)+3\,\sqrt{5}\label{e1}
\end{align}
with equality if and only if $T\in \Gamma'_3$.
\end{theorem}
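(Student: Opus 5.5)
The plan is to reuse the exact identity (\ref{1kinkar1}) from the proof of Theorem~\ref{th3.1}:
$$ASO(T)=\frac{2\sqrt{51}}{9}(n+1)+\frac{4\sqrt{3}}{9}(n-5)+\frac{4}{3}\sum_{(i,j)\in A_1}\phi_1(i,j)\,x_{ij}.$$
A direct computation shows that the right-hand side of (\ref{e1}) equals $\frac{2\sqrt{51}}{9}(n+1)+\frac{4\sqrt{3}}{9}(n-5)+4\,\phi_1(3,4)$. Indeed, the differences are $-\frac{\sqrt{51}}{9}$ from the $\sqrt{51}$ terms, $-\frac{32\sqrt3}{9}$ from the $\sqrt3$ terms, and $+3\sqrt5$. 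These match exactly $4\phi_1(3,4)=-\frac{32\sqrt3}{9}-\frac{\sqrt{51}}{9}+3\sqrt5$. So the theorem is equivalent to
$$\sum_{(i,j)\in A_1}\phi_1(i,j)\,x_{ij}\le 3\,\phi_1(3,4),$$
with equality exactly for $T\in\Gamma'_3$.

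The first step is a congruence argument showing that $T\notin\Gamma_1$, so that some edge lies in $A_1$. For a chemical tree, the handshake lemma gives $n_1=2+n_3+2n_4$. Hence $n=2+n_2+2n_3+3n_4$. Since $n\equiv 1\pmod 3$, we get $n_2+2n_3\equiv 2\pmod 3$, and in particular $n_2+n_3\ge 1$.

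Then I split into two cases, using Table~\ref{table1}: every $\phi_1(i,j)$ with $(i,j)\in A_1$ is negative, and $\phi_1(3,4)\approx-0.061$ is the largest of them.

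\emph{Case $n_3\ge 1$.} Fix a vertex $w$ of degree $3$. Its three incident edges are distinct and all lie in $A_1$, and each satisfies $\phi_1\le\phi_1(3,4)$, with equality only for type $(3,4)$. All other terms in the sum are negative. Hence the sum is at most $3\phi_1(3,4)$. Equality forces the three edges at $w$ to be of type $(3,4)$ and no other edge to lie in $A_1$. This means $n_2=0$ and $n_3=1$, with $w$ adjacent only to degree-$4$ vertices, that is, $T\in\Gamma'_3$.

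\emph{Case $n_3=0$.} Then $n_2\equiv 2\pmod 3$, so $n_2\ge 2$. Two degree-$2$ vertices have four edge incidences, and in a tree at most one edge can join them. So there are at least three distinct edges of types $(1,2),(2,2),(2,3),(2,4)$. Each of these has $\phi_1\le -0.073<\phi_1(3,4)$, so the sum is strictly less than $3\phi_1(3,4)$.

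Finally, for the converse, I would note that $\Gamma'_3$ is nonempty when $n\ge 13$ and $n\equiv1\pmod 3$, since then $x_{44}=\frac{n-13}{3}\ge 0$ is a nonnegative integer. Substituting its edge counts $x_{14}=\frac{2n+1}{3}$, $x_{34}=3$, $x_{44}=\frac{n-13}{3}$ gives equality, as already computed before the statement.

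The main obstacle is only bookkeeping: ensuring the counted edges are genuinely distinct, which is the reason for the shared-edge remark in the second case, and confirming numerically that $\phi_1(3,4)$ is the maximum of $\phi_1$ over $A_1$. Both are immediate from Table~\ref{table1}.
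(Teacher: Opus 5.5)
Your proposal is correct and follows essentially the same route as the paper: the identity \eqref{1kinkar1}, the handshake/mod-$3$ argument (the paper first excludes $\Gamma_1\cup\Gamma_2$ and then derives $n_2\ge 2$ when $n_3=0$, which your congruence $n_2+2n_3\equiv 2 \pmod 3$ does in one step), and the case split on $n_3$ with $\phi_1(3,4)$ as the largest coefficient on $A_1$. The only difference is bookkeeping: you use just the three edges at one degree-$3$ vertex (resp.\ three distinct edges at two degree-$2$ vertices), whereas the paper bounds all $3n_3$ (resp.\ $2n_2$) incidences, which requires the extra comparisons $f(3,3)<2f(3,4)$ and $f(2,2)<2f(2,4)$.
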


\begin{proof} Let $T\in \Gamma_1$. Then $n_2=n_3=0$. Since $n\equiv 1\pmod 3$ and $n\geq 13$, we can assume that $n=3\,k+1$, where $k$ is an integer with $k\geq 4$. By the well-known Handshaking Lemma, the sum of the degrees of all vertices in the tree $T$ is given by
$2\,(n-1)=n_1+4\,n_4$ as $n_2=n_3=0$. On the other hand, the order of $T$ satisfies $n=n_1+n_4$ as $n_2=n_3=0$. Solving this system of equations yields $$n_1=\frac{2\,(n+1)}{3}=2\,k+\frac{4}{3}~~\mbox{ and }~~n_4=\frac{n-2}{3}=k-\frac{1}{3}.$$
This is impossible, since both $n_1$ and $n_4$ must be integers. Hence, we arrive at a contradiction.

\vspace*{3mm}

Let $T\in \Gamma_2$. Then $n_2=1$ and $n_3=0$. By the well-known Handshaking Lemma, we have $2\,(n-1)=n_1+2+4\,n_4$. Moreover, $n=n_1+1+n_4$. Solving these two equations, we obtain
$$n_1=\frac{2\,n}{3}=2\,k+\frac{2}{3}~~\mbox{ and }~~n_4=\frac{n-3}{3}=k-\frac{2}{3}.$$
This is impossible, since both $n_1$ and $n_4$ must be integers. Hence, we arrive at a contradiction.

\vspace*{3mm}

Therefore, we conclude that $T\notin\Gamma_1\cup\Gamma_2$. Now, consider a function
\begin{align}
    f(i,j)=\frac{4}{3}\left[\frac{4\,\sqrt{3}}{3}\left(\frac{1}{i}+\frac{1}{j}-\frac{5}{4}\right)-\frac{\sqrt{51}}{3}\left(\frac{1}{i}+\frac{1}{j}-\frac{1}{2}\right)+\frac{3}{4}\sqrt{\frac{i^2+j^2}{i+j-2}}\right]=\frac{4}{3}\,\phi_1(i,j),\mbox{~~}(i,j)\in A_1.\nonumber
\end{align}
 By directly calculating, we find that $f(1,2)<0$, $f(1,3)<0$, $f(2,2)<0$, $f(2,3)<0$, $f(2,4)<0$, $f(3,3)<0$, $f(3,4)<0$. Furthermore, the maximum value of $f(i,j)$ over all $(i,j)\in A_1$ occurs at 
 $f(3,4)$.
From (\ref{1kinkar1}), we have
\begin{align}
ASO(T)
&=\frac{\sqrt{51}}{3}\cdot\frac{2\,n+2}{3}+\frac{4\,\sqrt{3}}{3}\cdot\frac{n-5}{3}+\sum_{(i,j)\in A_1}\,f(i,j)\,x_{ij}\nonumber\\[2mm]
&=\frac{\sqrt{51}}{3}\cdot\frac{2\,n+2}{3}+\frac{4\,\sqrt{3}}{3}\cdot\frac{n-5}{3}+f(1,2)x_{12}+f(1,3)x_{13}+f(2,2)x_{22}+f(2,3)x_{23}\nonumber\\[2mm]
&~~~~~~~~~~~~~~~~~~~~~~~~~~~+f(2,4)x_{24}+f(3,3)x_{33}+f(3,4)x_{34}. \label{inequality2411}
\end{align}

\vspace*{3mm}

\noindent
${\bf Case\,1.}$ $n_3=0$. By Hand-shaking lemma and the construction of chemical tree, we have
\begin{align}    n_1+n_2+n_4=n~~~\textup{and}~~~n_1+2\,n_2+4\,n_4=2\,(n-1).\nonumber
\end{align}
Then $n_2+3\,n_4=n-2=(3\,k+1)-2=3\,k-1$ for some integer $k\geq 4$, that is, $n_2=3\,(k-n_4)-1$. If $k\leq n_4$, then $n_2\leq -1$, a contradiction. Otherwise, $k\geq n_4+1$. Thus we have $n_2\geq 2$.

\vspace*{3mm}

Since $n_3=0$, it follows $x_{13}=x_{23}=x_{33}=x_{34}=0$. Using this, from (\ref{inequality2411}), we obtain
\begin{align}
  ASO(T)&=\frac{\sqrt{51}}{3}\cdot\frac{2\,n+2}{3}+\frac{4\,\sqrt{3}}{3}\cdot\frac{n-5}{3}+f(1,2)x_{12}+f(2,2)x_{22}+f(2,4)x_{24}\nonumber\\[2mm]
  &\leq \frac{\sqrt{51}}{3}\cdot\frac{2\,n+2}{3}+\frac{4\,\sqrt{3}}{3}\cdot\frac{n-5}{3}+f(2,4)\left(x_{12}+2\,x_{22}+x_{24}\right)\nonumber
  \end{align}
  \begin{align}
  &=\frac{\sqrt{51}}{3}\cdot\frac{2\,n+2}{3}+\frac{4\,\sqrt{3}}{3}\cdot\frac{n-5}{3}+2\,n_2f(2,4)\nonumber\\[2mm]
  &\leq \frac{\sqrt{51}}{3}\cdot\frac{2\,n+2}{3}+\frac{4\,\sqrt{3}}{3}\cdot\frac{n-5}{3}+4\,f(2,4)\nonumber\\[2mm]
  &=\frac{\sqrt{51}}{3}\cdot\frac{2\,n+2}{3}+\frac{4\,\sqrt{3}}{3}\cdot\frac{n-5}{3}+\frac{16}{3}\left(-\frac{2\,\sqrt{3}}{3}-\frac{\sqrt{51}}{12}+\frac{3\,\sqrt{5}}{4}\right)\nonumber\\[2mm]
  &=\frac{\sqrt{51}}{9}\,(2\,n-2)+\frac{4\sqrt{3}}{9}\,(n-13)+4\,\sqrt{5}\nonumber\\[2mm]
  &<\frac{\sqrt{51}}{9}\,(2\,n+1)+\frac{4\,\sqrt{3}}{9}(n-13)+3\,\sqrt{5}\nonumber
\end{align}
as $f(1,2)<f(2,4)$, $f(2,2)<2\,f(2,4)$.
The upper bound strictly holds.

\vspace*{3mm}

\noindent
${\bf Case\,2.}$ $n_3\geq 1$. Since $f(2,2)<f(1,2)<f(2,4)<0$, $f(3,4)>f(1,3)>f(2,3)$ and $f(3,3)<2\,f(3,4)$, from (\ref{inequality2411}), we obtain
\begin{align}
    ASO(T)&\leq \frac{\sqrt{51}}{3}\cdot\frac{2\,n+2}{3}+\frac{4\,\sqrt{3}}{3}\cdot\frac{n-5}{3}+f(3,4)(x_{13}+x_{23}+x_{34})+f(3,3)x_{33}\label{e2}\\[2mm]
&\leq \frac{2\,\sqrt{51}}{9}\,(n+1)+\frac{4\,\sqrt{3}}{9}\,(n-5)+f(3,4)(x_{13}+x_{23}+2\,x_{33}+x_{34})\label{e3}\\[2mm]
&=\frac{\sqrt{51}}{3}\cdot\frac{2n+2}{3}+\frac{4\sqrt{3}}{3}\cdot\frac{n-5}{3}+3\,n_3\,f(3,4)\nonumber\\[2mm]
&\leq \frac{\sqrt{51}}{3}\cdot\frac{2n+2}{3}+\frac{4\sqrt{3}}{3}\cdot\frac{n-5}{3}+3\,f(3,4)\label{e4}\\[2mm]
&=\frac{\sqrt{51}}{3}\left(\frac{2n+2}{3}-\frac{1}{3}\right)+\frac{4\sqrt{3}}{3}\left(\frac{n-5}{3}-\frac{8}{3}\right)+3\,\sqrt{5}\nonumber\\[2mm]
&=\frac{\sqrt{51}}{9}\,(2\,n+1)+\frac{4\,\sqrt{3}}{9}\,(n-13)+3\,\sqrt{5}\nonumber
\end{align}
as $n_3\geq 1$ and
$$3\,f(3,4)=-\frac{32\,\sqrt{3}}{9}-\frac{\sqrt{51}}{9}+3\,\sqrt{5}<0.$$
The first part of the proof is done.

\vspace*{3mm}

Suppose that equality holds in (\ref{e1}). Then all inequalities in ${\bf Case\,2}$ must be equalities. From the equality in (\ref{e2}), we obtain $x_{12}=x_{13}=x_{22}=x_{23}=x_{24}=0$. From the equality in (\ref{e3}), we obtain $x_{33}=0$. Again from the equality in (\ref{e4}), we obtain $n_3=1$. Thus we have $x_{34}=x_{13}+x_{23}+2\,x_{33}+x_{34}=3\,n_3=3$ and $n_2=\frac{1}{2}\,(x_{12}+2\,x_{22}+x_{23}+x_{24})=0$.
Thus we have
\begin{align}    n_1+n_2+n_3+n_4=n~~~\textup{and}~~~n_1+2\,n_2+3\,n_3+4\,n_4=2\,(n-1),\nonumber
\end{align}
that is,
$$n_1+n_4=n-1~~~\textup{and}~~~n_1+4\,n_4=2\,n-5,$$
that is,
  $$n_1=\frac{2\,n+1}{3}~~\mbox{ and }~~n_4=\frac{n-4}{3}.$$
Thus we have
$$x_{14}=x_{12}+x_{13}+x_{14}=n_1=\frac{2\,n+1}{3}$$
Since $4\,n_4=x_{14}+x_{24}+x_{34}+2\,x_{44}=x_{14}+3+2\,x_{44}$, we obtain
$$x_{44}=\frac{1}{2}\,(4\,n_4-x_{14}-3)=\frac{n-13}{3}.$$
Hence $x_{14}=\frac{2\,n+1}{3}$, $x_{34}=3$, $x_{44}=\frac{n-13}{3}$, and $x_{11}=x_{12}=x_{13}=x_{22}=x_{23}=x_{24}=x_{33}=0$, that is, $T\in\Gamma_3'$.

\vspace*{3mm}

Conversely, let $T\in\Gamma_3'$. Then we have $x_{14}=\frac{2\,n+1}{3}$, $x_{34}=3$, $x_{44}=\frac{n-13}{3}$, and $x_{11}=x_{12}=x_{13}=x_{22}=x_{23}=x_{24}=x_{33}=0$. Hence
$$ASO(T)=\frac{\sqrt{51}}{9}\,(2\,n+1)+\frac{4\,\sqrt{3}}{9}\,(n-13)+3\,\sqrt{5}.$$
\end{proof}
The maximal chemical trees with respect to $ASO$ for $n=6$ are easily obtained by using Sage~\cite{ST}, as shown in Figure \ref{7}.
\begin{figure}[h]
\begin{center}
\begin{tikzpicture}
\clip(-2.25,-2.5) rectangle (16,1.8);

\draw [line width=0.75pt] (4,0)--(7,0);
  \draw [line width=0.75pt] (6,-0.8)--(6,0.8);



\draw (5.25,-1.5) node [anchor=north west]{$H_4$};

\begin{scriptsize}

\draw [fill=black] (4,0) circle (1.5pt);
\draw [fill=black] (5,0) circle (1.5pt);

\draw [fill=black] (6,0) circle (1.5pt);
\draw [fill=black] (7,0) circle (1.5pt);
\draw [fill=black] (6,-0.8) circle (1.5pt);
\draw [fill=black] (6,0.8) circle (1.5pt);

\end{scriptsize}
\end{tikzpicture}
\caption{Chemical tree $H_4$.}\label{7}
\end{center}
\end{figure}

Let $\Gamma'_2$ denote the subclass of chemical trees in $\Gamma_2$ that contains exactly one vertex of degree two, which is adjacent only to vertices of degree four, that is, $x_{14}=\frac{2\,n}{3}$, $x_{24}=2$, $x_{44}=\frac{n-9}{3}$, and $x_{11}=x_{12}=x_{13}=x_{22}=x_{23}=x_{34}=x_{33}=0$. For $T\in \Gamma'_2$, we have
\begin{align}
ASO(T)=\frac{2\,\sqrt{51}}{9}n+\frac{4\,\sqrt{3}}{9}(n-9)+2\,\sqrt{5}.\nonumber
\end{align}

\begin{theorem}\label{theorem3.4}
    Let $n$ be an integer with $n\geq 9$ and $n\equiv 0\pmod{3}$. Then,
    \begin{align}
        ASO(T)\leq \frac{2\,\sqrt{51}}{9}n+\frac{4\,\sqrt{3}}{9}(n-9)+2\,\sqrt{5}\label{e22}
    \end{align}
with equality if and only if $T\in\Gamma_2^\prime$.

\end{theorem}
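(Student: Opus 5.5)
The plan mirrors the proof of Theorem~\ref{theorem2.7}, starting from the identity (\ref{1kinkar1}) with $f(i,j)=\frac{4}{3}\phi_1(i,j)$ on $A_1$. Write
\[
ASO(T)=\frac{2\sqrt{51}}{9}(n+1)+\frac{4\sqrt{3}}{9}(n-5)+\sum_{(i,j)\in A_1} f(i,j)\,x_{ij}.
\]

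\textbf{Ruling out $\Gamma_1$.} Set $n=3k$. If $T\in\Gamma_1$, the handshaking lemma gives $n_1=\frac{2(n+1)}{3}$, which is not an integer. So $n_2+n_3\ge 1$.

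\textbf{Case $n_3=0$.} Then $n_2\ge 1$, and $x_{13}=x_{23}=x_{33}=x_{34}=0$. From Table~\ref{table1}:
\begin{itemize}
\item $f(1,2)\approx -0.168$ and $f(2,4)\approx -0.097$, so $f(1,2)<f(2,4)$;
\item $f(2,2)\approx -0.357$, so $f(2,2)<2f(2,4)$.
\end{itemize}
Hence
\[
\sum f(i,j)x_{ij}\le f(2,4)\,(x_{12}+2x_{22}+x_{24})=2n_2\,f(2,4)\le 2f(2,4).
\]
Equality forces $x_{12}=x_{22}=0$ and $n_2=1$. Substituting $2f(2,4)=\frac{8}{3}\phi_1(2,4)$ should give exactly
\[
\frac{2\sqrt{51}}{9}(n+1)+\frac{4\sqrt3}{9}(n-5)-\frac{16\sqrt3}{9}-\frac{2\sqrt{51}}{9}+2\sqrt5=\frac{2\sqrt{51}}{9}n+\frac{4\sqrt3}{9}(n-9)+2\sqrt5,
\]
which I will verify.

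\textbf{Case $n_3\ge1$.} I expect this to be the main obstacle. A crude bound summing over both degree-$2$ and degree-$3$ contributions fails, since $3f(3,4)\approx -0.24$ is below $2f(2,4)\approx-0.19$ only narrowly. So I need
\[
\sum f(i,j)x_{ij}<2f(2,4).
\]
Using the same relations as in Theorem~\ref{theorem2.7}:
\begin{itemize}
\item edges at degree-$2$ vertices contribute at most $2n_2 f(2,4)\le 0$;
\item edges at degree-$3$ vertices contribute at most $3n_3 f(3,4)$, with the $(3,3)$ edge handled by $f(3,3)<2f(3,4)$.
\end{itemize}
One gap remains: an edge of type $(2,3)$ touches both vertex classes. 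I will bound $f(2,3)\approx-0.259$ by $f(2,4)+f(3,4)\approx -0.178$, and it is smaller, so the split is valid. This gives
\[
\sum f\,x_{ij}\le 2n_2 f(2,4)+3n_3 f(3,4)\le 3f(3,4)<2f(2,4),
\]
using $f(3,4)\approx -0.0813$, so that $3f(3,4)\approx -0.244<-0.195$. The inequality is strict, so there are no extremal trees in this case. I will confirm these numerical comparisons with exact surds.

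\textbf{Equality.} Equality therefore requires $n_3=0$, $n_2=1$, $x_{12}=x_{22}=0$, so the unique degree-$2$ vertex has both neighbours of degree $4$, i.e.\ $x_{24}=2$. Solving
\[
n_1+1+n_4=n,\qquad n_1+2+4n_4=2(n-1)
\]
gives $n_1=\frac{2n}{3}$ and $n_4=\frac{n-3}{3}$, so $x_{14}=\frac{2n}{3}$. Then
\[
x_{44}=\frac{4n_4-x_{14}-2}{2}=\frac{n-9}{3}.
\]
This is nonnegative exactly when $n\ge 9$, and it is why the hypothesis $n\ge9$ is needed. Hence $T\in\Gamma_2'$.

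\textbf{Converse.} A tree in $\Gamma_2'$ attains the bound by direct computation. For existence, take a $4$-regular-interior tree and subdivide one edge between two degree-$4$ vertices.
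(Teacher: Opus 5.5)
Your proof is correct. It uses the same machinery as the paper: the identity (\ref{1kinkar1}), the negativity of $f$ on $A_1$, and charging each edge of type $(i,j)\in A_1$ to its endpoints of degree $2$ or $3$. The case analysis, however, is organized differently.

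The paper first excludes both $\Gamma_1$ and $\Gamma_3$ by integrality and then splits on $n_2$.
\begin{itemize}
\item When $n_2=0$, it uses integrality once more, via $n_3=\frac{3(k-n_4)-2}{2}$, to force $n_3\ge 2$, and bounds the sum by $6f(3,4)$.
\item When $n_2\ge 1$, it discards the nonpositive terms $f(1,3)x_{13}+f(3,3)x_{33}+f(3,4)x_{34}$ and uses only $f(2,3)<f(2,4)$. It obtains $2n_2f(2,4)\le 2f(2,4)$ and then reads $x_{13}=x_{23}=x_{33}=x_{34}=0$ off the equality case.
\end{itemize}

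You instead split on $n_3$. When $n_3\ge 1$, you charge the mixed edge through $f(2,3)<f(2,4)+f(3,4)$, which gives the joint bound $2n_2f(2,4)+3n_3f(3,4)\le 3f(3,4)$. This is already strictly below $2f(2,4)$: exactly, $3f(3,4)-2f(2,4)=-\frac{16\sqrt3}{9}+\frac{\sqrt{51}}{9}+\sqrt5\approx-0.05$.

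What your route buys is economy. Only $\Gamma_1$ has to be excluded, and the extremal trees are confined to $n_3=0$ in one step. The paper's extra parity arguments (ruling out $\Gamma_3$ and forcing $n_3\ge 2$) turn out to be unnecessary. The paper's route, in exchange, needs only the cruder comparison $f(2,3)<f(2,4)$ and sign information in its main case.

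One small correction to your write-up: the sentence saying that a crude bound summing the degree-$2$ and degree-$3$ contributions ``fails'' is misstated. That combined bound is exactly what succeeds, because $3f(3,4)<2f(2,4)$.
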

\begin{proof}
Let $T\in \Gamma_1$. Since $n\equiv 0\pmod 3$ and $n\geq 9$, we can assume that $n=3\,k$, where $k$ is an integer with $k\geq 3$. By the well-known Handshaking lemma, the sum of the degrees of all vertices in the tree $T$ is given by $2\,(n-1)=n_1+4\,n_4$ as $T\in \Gamma_1$. On the other hand, the order of $T$ satisfies $n=n_1+n_4$. Solving this system of equations yields
\begin{align}
    n_1=2\,k+\frac{2}{3}~~\textup{and}~~n_4=k-\frac{2}{3}.
\end{align}
This is impossible since both $n_1$ and $n_4$ must be integers. Hence, we arrive at a contradiction.

\vspace*{3mm}

Let $T\in\Gamma_3$. By the well-known Handshaking Lemma, the sum of the degrees of all vertices in the tree $T$ is given by $2\,(n-1)=n_1+3+4\,n_4$. Moreover, $n=n_1+1+n_4$. Solving these two equations, we obtain
\begin{align}
n_1=2\,k+\frac{1}{3}~\textup{and}~~n_4=k-\frac{4}{3}. \nonumber
\end{align}
This is impossible as both $n_1$ and $n_4$ must be integers. Hence, we arrive at a contradiction.

\vspace*{3mm}

Therefore, we conclude $T\notin\Gamma_1\cup\Gamma_3$. Again, consider the function,
\begin{align}
    f(i,j)=\frac{4}{3}\left[\frac{4\sqrt{3}}{3}\left(\frac{1}{i}+\frac{1}{j}-\frac{5}{4}\right)-\frac{\sqrt{51}}{3}\left(\frac{1}{i}+\frac{1}{j}-\frac{1}{2}\right)+\frac{3}{4}\sqrt{\frac{i^2+j^2}{i+j-2}}\right]=\frac{4}{3}\,\phi_1(i,j),~~(i,j)\in A_1.\nonumber
\end{align}
 By direct calculation, we have $f(1,2)<0$, $f(1,3)<0$, $f(2,2)<0$, $f(2,3)<0$, $f(2,4)<0$, $f(3,3)<0$, $f(3,4)<0$ and $\{f(i,j):(i,j)\in A_1\setminus (3,4)\}_{\textup{max}}=f(2,4)$. From (\ref{1kinkar1}), we have
 \begin{align}
    & ASO(T)=\frac{\sqrt{51}}{3}\cdot\frac{2n+2}{3}+\frac{4\sqrt{3}}{3}\cdot\frac{n-5}{3}+f(1,2)x_{12}+f(1,3)x_{13}+f(2,2)x_{22}+f(2,3)x_{23}+f(2,4)x_{24}\nonumber\\[2mm]
     &~~~~~~~~~~~~~~~~~~~~~~~~~~~+f(3,3)x_{33}+f(3,4)x_{34}.\label{e23}
 \end{align}

\vspace*{3mm}

\noindent
${\bf Case\,1.}$ $n_2=0$. By Hand-shaking Lemma and the construction of chemical tree, we have
\begin{align}
    n_1+n_3+n_4=n~~~\textup{and}~~~n_1+3\,n_3+4\,n_4=2\,(n-1).\nonumber
\end{align}
Then $n_3=\frac{n-3\,n_4-2}{2}=\frac{3\,(k-n_4)-2}{2}\in\mathbb{N}$ for some integer $k\geq 3$. It follows that $k-n_4\geq 2$. Hence, $n_3\geq 2$.

\vspace*{3mm}

Since $n_2=0$, we have $x_{12}=x_{22}=x_{23}=x_{24}=0$. Using this, from (\ref{e23}), we obtain
\begin{align}
  ASO(T)&=\frac{\sqrt{51}}{3}\cdot\frac{2\,n+2}{3}+\frac{4\,\sqrt{3}}{3}\cdot\frac{n-5}{3}+f(1,3)x_{13}+f(3,3)x_{33}+f(3,4)x_{34}\nonumber\\[2mm]
  &\leq \frac{\sqrt{51}}{3}\cdot\frac{2\,n+2}{3}+\frac{4\,\sqrt{3}}{3}\cdot\frac{n-5}{3}+f(3,4)\left(x_{13}+2\,x_{33}+x_{34}\right)\nonumber\\[2mm]
  &=\frac{\sqrt{51}}{3}\cdot\frac{2\,n+2}{3}+\frac{4\,\sqrt{3}}{3}\cdot\frac{n-5}{3}+3\,n_3f(3,4)\nonumber\\[2mm]
  &\leq \frac{\sqrt{51}}{3}\cdot\frac{2\,n+2}{3}+\frac{4\sqrt{3}}{3}\cdot\frac{n-5}{3}+6\,f(3,4)\nonumber\\
  &=\frac{2\,\sqrt{51}}{9}\,(n+1)+\frac{4\,\sqrt{3}}{9}\,(n-5)+8\left(-\frac{8\,\sqrt{3}}{9}-\frac{\sqrt{51}}{36}+\frac{3\,\sqrt{5}}{4}\right)\nonumber
  \end{align}
  \begin{align}
    &=\frac{2\,\sqrt{51}}{9}\,n+\frac{4\,\sqrt{3}}{9}\,(n-21)+6\,\sqrt{5}\nonumber\\[2mm]
    &<\frac{2\,\sqrt{51}}{9}\,n+\frac{4\,\sqrt{3}}{9}\,(n-9)+2\,\sqrt{5}\nonumber
\end{align}
as $f(1,3)<f(3,4)$, $f(3,3)<2\,f(3,4)$.
The upper bound strictly holds.

\vspace*{3mm}

\noindent
${\bf Case\,2.}$ $n_2\geq 1$. Since $f(3,3)<f(1,3)<f(3,4)<0$, $f(2,4)>f(1,2)>f(2,3)$ and $f(2,2)<2\,f(2,4)$, from (\ref{e23}), we obtain
\begin{align}
    ASO(T)&\leq \frac{\sqrt{51}}{3}\cdot\frac{2\,n+2}{3}+\frac{4\,\sqrt{3}}{3}\cdot\frac{n-5}{3}+f(2,4)(x_{12}+x_{23}+x_{24})+f(2,2)x_{22}\label{e24}\\[2mm]
     &\leq \frac{2\,\sqrt{51}}{9}(n+1)+\frac{4\,\sqrt{3}}{9}(n-5)+f(2,4)(x_{12}+2\,x_{22}+x_{23}+x_{24})\label{e25}\\[2mm]
&=\frac{\sqrt{51}}{3}\cdot\frac{2\,n+2}{3}+\frac{4\,\sqrt{3}}{3}\cdot\frac{n-5}{3}+2\,n_2\,f(2,4)\nonumber\\[2mm]
&\leq \frac{\sqrt{51}}{3}\cdot\frac{2\,n+2}{3}+\frac{4\,\sqrt{3}}{3}\cdot\frac{n-5}{3}+2\,f(2,4)\label{e26}
\end{align}
\begin{align}
&=\frac{\sqrt{51}}{3}\left(\frac{2\,n+2}{3}-\frac{2}{3}\right)+\frac{4\,\sqrt{3}}{3}\left(\frac{n-5}{3}-\frac{4}{3}\right)+2\sqrt{5}\nonumber\\[2mm]
    &=\frac{2\,\sqrt{51}}{9}\,n+\frac{4\,\sqrt{3}}{9}\,(n-9)+2\,\sqrt{5}\nonumber
\end{align}
as $n_2\geq 1$ and
$$2\,f(2,4)=-\frac{16\,\sqrt{3}}{9}-\frac{2\,\sqrt{51}}{9}+2\,\sqrt{5}<0.$$
The first part of the proof is done.

\vspace*{3mm}

Suppose that equality holds in (\ref{e22}). Then all inequalities in ${\bf Case\,2}$ must be equalities. From the equality in (\ref{e24}), we obtain $x_{12}=x_{13}=x_{23}=x_{33}=x_{34}=0$. From the equality in (\ref{e25}), we obtain $x_{22}=0$. Again from the equality in (\ref{e26}), we obtain $n_2=1$. Thus we have $x_{24}=x_{12}+2\,x_{22}+x_{23}+x_{24}=2\,n_2=2$ and $n_3=\frac{1}{3}\,(x_{13}+x_{23}+2\,x_{33}+x_{34})=0$.
From $n_1+n_2+n_3+n_4=n$ and $n_1+2\,n_2+3\,n_3+4\,n_4=2\,(n-1)$, we have
$n_1+n_4=n-1$ and $n_1+4\,n_4=2\,n-4$, that is,
$n_1=\frac{2\,n}{3}$ and $n_4=\frac{n-3}{3}$.
Thus, we have
$$x_{14}=x_{12}+x_{13}+x_{14}=n_1=\frac{2\,n}{3},~\mbox{ and }~ x_{44}=\frac{1}{2}\,(4n_4-x_{14}-x_{24}-x_{34})=\frac{n-9}{3}.
$$
Hence $x_{14}=\frac{2\,n}{3}$, $x_{24}=2$, $x_{44}=\frac{n-9}{3}$, and $x_{11}=x_{12}=x_{13}=x_{22}=x_{23}=x_{33}=x_{34}=0$, that is, $T\in\Gamma_2'$.

\vspace*{3mm}

Conversely, let $T\in\Gamma_2'$. Then we have $x_{14}=\frac{2\,n}{3}$, $x_{24}=2$, $x_{44}=\frac{n-9}{3}$, and $x_{11}=x_{12}=x_{13}=x_{22}=x_{23}=x_{33}=x_{34}=0$. Hence
$$ASO(T)=\frac{2\,\sqrt{51}}{9}\,n+\frac{4\,\sqrt{3}}{9}\,(n-9)+2\sqrt{5}.$$
\end{proof}

\begin{table}[ht]
\begin{center}
\begin{tabular}{|c|c|c|c|c|c|c|c|c|c|}
\hline
$(d_i,d_j)$ & $(1,2)$ & (1,3) & (1,4) & $(2,2)$ & $(2,3)$ & $(2,4)$ & $(3,3)$ & $(3,4)$ & $(4,4)$ \\
\hline
$h(d_i,d_j)$ & $\sqrt{5}$ & $\sqrt{5}$ & $\sqrt{\frac{17}{3}}$ & $2$ & $\sqrt{\frac{13}{3}}$ & $\sqrt{5}$ & $\frac{3}{\sqrt{2}}$ & $\sqrt{5}$ & $\frac{4}{\sqrt{3}}$ \\
\hline
\end{tabular}
\caption{Values of the function $h(d_i,d_j)$ for different edges $v_i v_j \in E(\Omega)$.
}\label{table2}
\end{center}
\end{table}


\begin{proposition}
    For any chemical graph $\Omega\in \mathcal{CG}_n$,
    \begin{align}
       2\,(n-3)+2\,\sqrt{5} \leq ASO(\Omega)\leq \frac{8\,n\sqrt{3}}{3},\label{e44}
    \end{align}
   herein, equality is attained on the left side if and only if $\Omega\cong P_n$, while equality is attained on the right side if and only if $\Omega$ is a $4$-regular graph. 
\end{proposition}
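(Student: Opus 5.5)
The lower bound is immediate. A chemical graph is connected, so Lemma~\ref{e444} gives $ASO(\Omega)\geq 2\sqrt{5}+2(n-3)$ for every $\Omega\in\mathcal{CG}_n$ with $n>2$. It also gives equality if and only if $\Omega\cong P_n$, and $P_n$ is indeed a chemical graph. So the whole argument concerns the upper bound.

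For the upper bound we need the largest value of $h$ over all admissible degree pairs $(d_i,d_j)$ with $1\leq d_i,d_j\leq 4$. Reading off Table~\ref{table2}:
\[
h(4,4)=\tfrac{4}{\sqrt3}\approx 2.309,\qquad h(1,4)=\sqrt{17/3}\approx 2.380,\qquad h(3,3)=\tfrac{3}{\sqrt2}\approx 2.121,
\]
and every other pair gives $2$, $\sqrt{13/3}$ or $\sqrt5\approx 2.236$. So $h(1,4)$ is actually the largest single value. The naive estimate ``each edge contributes at most $h(4,4)$'' therefore fails, and we should not bound edge by edge.

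Instead I would weight each edge by its degrees. Since $n=\sum_{v_iv_j}(\frac1{d_i}+\frac1{d_j})$ (the identity derived above), the claimed bound is equivalent to
\[
\sum_{v_iv_j\in E(\Omega)}\Bigl[h(d_i,d_j)-\tfrac{8}{\sqrt3}\Bigl(\tfrac1{d_i}+\tfrac1{d_j}\Bigr)\Bigr]\leq 0 .
\]
It therefore suffices to show that $g(i,j)=h(i,j)-\frac{8}{\sqrt3}(\frac1i+\frac1j)\leq 0$ for every pair $1\leq i\leq j\leq 4$ with $(i,j)\neq(1,1)$, with equality only at $(4,4)$. At $(4,4)$ we get $\frac{4}{\sqrt3}-\frac{8}{\sqrt3}\cdot\frac12=0$. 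For the other pairs, $\frac1i+\frac1j\geq\frac7{12}$, which gives $\frac8{\sqrt3}(\frac1i+\frac1j)\geq \frac{14}{3\sqrt3}\approx2.694$. Every $h$ value in Table~\ref{table2} is below $2.39$, so $g(i,j)<0$ strictly. In particular $g(1,4)=\sqrt{17/3}-\frac{10}{\sqrt3}<0$.

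Summing over the edges gives $ASO(\Omega)\leq \frac{8\sqrt3}{3}n$, with equality exactly when every edge is of type $(4,4)$. Because $\Omega$ is connected, that means $\Omega$ is $4$-regular, and conversely a $4$-regular graph has $2n$ edges each contributing $4/\sqrt3$, giving $\frac{8n}{\sqrt3}=\frac{8n\sqrt3}{3}$.

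The main obstacle, and the step I would check most carefully, is recognizing that $h(1,4)>h(4,4)$, so the proof needs the degree-weighted identity $n=\sum(\frac1{d_i}+\frac1{d_j})$ rather than a bound per edge. Once that reformulation is in place, verifying $g<0$ is a short finite check.
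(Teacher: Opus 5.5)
Your proof is correct, including the lower bound via Lemma~\ref{e444}, but the upper bound takes a different route from the paper's.

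The paper never uses $n=\sum_{v_iv_j\in E(\Omega)}\bigl(\frac{1}{d_i}+\frac{1}{d_j}\bigr)$ here. Instead, it lets $p$ be the number of pendant vertices and bounds each pendant edge by $\sqrt{17/3}$ and each non-pendant edge by $\frac{4}{\sqrt3}$. It then applies the handshake inequality $2m\le p+4(n-p)$, i.e.\ $m\le 2n-\frac{3p}{2}$, to get $ASO(\Omega)\le \frac{8n}{\sqrt3}+\bigl(\sqrt{17/3}-\frac{10}{\sqrt3}\bigr)p$. The negative coefficient of $p$ is how the paper absorbs the fact you flagged, $h(1,4)>h(4,4)$. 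Equality then forces $p=0$ and all vertices of degree $4$.

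In effect, the paper charges $\frac{10}{\sqrt3}$ to each pendant edge and $\frac{4}{\sqrt3}$ to each non-pendant edge against the budget $\frac{8n}{\sqrt3}$. The value $\frac{10}{\sqrt3}$ is exactly your weight $\frac{8}{\sqrt3}(1+\frac14)$ for a $(1,4)$ edge. Both charges are lower bounds for your per-edge weights $\frac{8}{\sqrt3}(\frac1{d_i}+\frac1{d_j})$, so the paper's argument is a coarsening of your exact reciprocal-degree weighting.

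What your version buys is a single uniform per-edge check and an immediate equality case: every edge must be of type $(4,4)$. It is also the same device the paper itself uses for chemical trees in Theorem~\ref{th3.1}. The paper's version needs only the two maxima of $h$ on pendant and non-pendant edges plus an edge count. The price is having to note that there are exactly $p$ pendant edges (no isolated edge) and splitting the equality analysis into $p\ge 1$ and $p=0$.
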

\begin{proof}
\noindent\textbf{Lower Bound:}
Since every chemical graph is a connected graph, by Theorem~\ref{e444} we have
$$
ASO(\Omega) \ge 2\,\sqrt{5} + 2\,(n-3).
$$
Moreover, equality holds if and only if $\Omega \cong P_n$.

\vspace*{3mm}

\noindent\textbf{Upper Bound:} Let $p\,(\geq 0)$ be the number of pendant vertices in $\Omega$. Since $\Omega$ is a chemical graph, we have $2\,m = \sum_{i=1}^{n} d_i \le p+4\,(n-p)$, that is,
$m \le 2\,n-\frac{3\,p}{2}$. Moreover, the equality holds if and only if all the non-pendant vertices are of degree $4$. From Table \ref{table2}, one can easily see that $h(d_i,d_j)\leq \sqrt{\frac{17}{3}}$ for any edge $v_iv_j\in E(\Omega)$ with equality if and only if $(d_i,d_j)=(4,1)$, and $h(d_i,d_j)\leq \frac{4}{\sqrt{3}}$ for all non-pendant edges $v_iv_j\in E(\Omega)$ with equality if and only if $(d_i,d_j)=(4,4)$. Thus we obtain
\begin{align*}
ASO(\Omega)
= \sum_{v_iv_j\in E(\Omega)}\,\sqrt{\frac{d_i^2+d_j^2}{d_i+d_j-2}}&=\sum_{v_iv_j\in E(\Omega)}\,h(d_i,d_j)\\[3mm]
&=\sum_{v_iv_j\in E(\Omega),\atop d_i\geq d_j=1}\,h(d_i,d_j)+\sum_{v_iv_j\in E(\Omega),\atop d_i\geq d_j\geq 2}\,h(d_i,d_j)\\[3mm]
&\le \sqrt{\frac{17}{3}}\,p+\frac{4}{\sqrt{3}}\,(m-p)\\[3mm]
&\leq\frac{8\,n}{\sqrt{3}}+\Big(\sqrt{\frac{17}{3}}-\frac{10}{\sqrt{3}}\Big)\,p
\end{align*}
as $m \le 2\,n-\frac{3\,p}{2}$. If $p\geq 1$, then from the above, we obtain
$$ASO(\Omega)\leq \frac{8\,n}{\sqrt{3}}+\Big(\sqrt{\frac{17}{3}}-\frac{10}{\sqrt{3}}\Big)<\frac{8\,n}{\sqrt{3}}.$$
The upper bound strictly holds. Otherwise, $p=0$.
Thus we have
$$ASO(\Omega)\leq \frac{8\,n}{\sqrt{3}}.$$
Moreover, the equality holds if and only if all the non-pendant vertices are of degree $4$, that is, if and only if $\Omega$ is isomorphic to a $4$-regular graph (as $p=0$). This completes the proof.
\end{proof}

\section{Chemical Application of the Augmented Sombor Index}\label{section4}
Topological indices are key tools in mathematical chemistry; yet, many are mostly theoretical, without direct chemical interpretation. It is therefore essential to choose a descriptor that provides substantial structural information. Consequently, it needs careful examination, as an effective index should correlate well with the experimental physicochemical properties of molecules. In \cite{DGA}, the researchers revealed that the $ASO$ index can predict various physicochemical properties of octane isomers. However, these molecules consist solely of C–C single bonds, and their structural simplicity limits the assessment of the broader applicability of the index. To address this limitation, we consider molecules with cyclic structures and double bonds, specifically a set of benzenoid hydrocarbons (BHCs) and several anti-cancer drugs in this study.

To begin with, we analyze the relationship of the index with two physicochemical properties of BHCs: the boiling point ($BP$) and the $\pi$-electron energy ($E_{\pi}$). The experimental data were taken from \cite{mon23,lucic09,NJC1,dasalharbi25}. Figure \ref{BH} displays the set of BHCs examined in this work. The relationship between the index and each property is examined through the following linear regression model.
\begin{equation}\label{EQN1}
\text{Property} = \text{Slope} \ (\pm 2 \times \text{error}) \text{Descriptor} + \text{Intercept} \ (\pm 2 \times \text{error}).
\end{equation}

In addition to the correlation coefficient $R$, the models are evaluated using the $F$-test ($F$), root mean square error ($RMSE$), and significance F ($SF$), to provide a more accurate assessment of model fit.
For $BP$ and $E_\pi$, equation \eqref{EQN1}
provides the following models.
\begin{eqnarray}\label{EQN10}
&BP = 9.4444(\pm 0.3786)(ASO)+21.7895(\pm 18.9467),\\
\nonumber
&R^{2}=0.9928,~~~RMSE=8.2126,~~~F=2488.5585
,~~~SF=9.45 \times 10^{-21}.
\end{eqnarray}
\begin{eqnarray}\label{EQN11}
&E_\pi = 0.5326(\pm 0.0135)(ASO)+1.9443
(\pm 0.6741),\\
\nonumber
&R^{2}=0.9971
,~~~RMSE=0.2922,~~~F=6253.0485
,~~~SF=2.46 \times 10^{-24}.
\end{eqnarray}
\vspace*{1mm}

The models~\eqref{EQN10} and \eqref{EQN11} show that the $ASO$ index  accounts for 99\% of the variation in both $BP$ and $E_{\pi}$. These relationships are illustrated in Figure~\ref{dhvap S.jpg}. In addition, the combination of low $RMSE$, very high $F$ statistics and very small significance levels ($SF < 0.05$) confirms the conclusion that the $ASO$ index provides accurate and reliable predictions for $BP$ and $E_{\pi}$.

\vspace*{3.5mm}
\noindent
We next study molecular graphs that contain cyclic substructures, as such structures are often found in drug molecules. The compounds analyzed are Carmustine, Deguelin, Convolutamide A, Tambjamine K, Aminopterin, Melatonin, Convolutamine F, Perfragilin A, Caulibugulone E, Minocycline,
Convolutamydine A, Aspidostomide E, Amathaspiramide E, and Pterocellin B. For these compounds, we examine two properties: molar refraction ($MR$) and boiling point ($BP$). The experimental data were taken from \cite{dasalharbi25,Heliyon2020}.

\begin{figure}[H]

\centering


\caption{Graph-based representations of BHCs.}

\label{BH}

\end{figure}

\vspace*{5mm}



\begin{figure}[H]
\includegraphics[width=15cm]{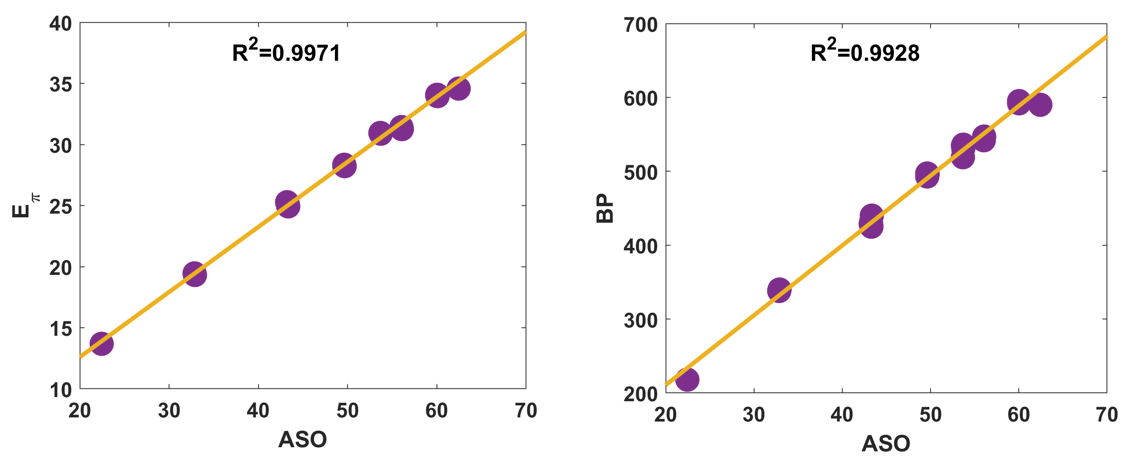}
\centering
\caption{Linear regression plots of $ASO$ with $BP$, $E_{\pi}$ for BHCs.}
\label{dhvap S.jpg}
\end{figure}
\vspace*{3mm}
\begin{figure}[H]
\includegraphics[width=15cm]{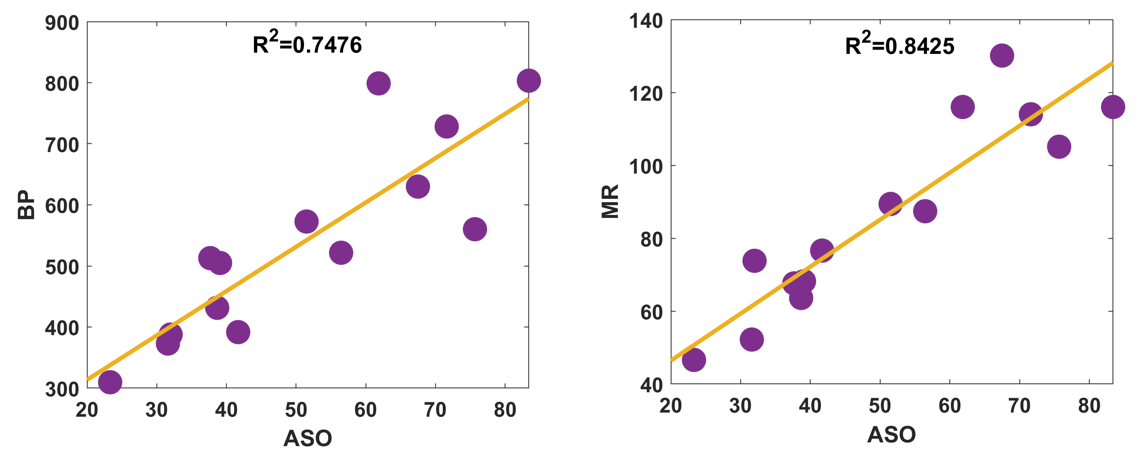}
\centering \caption{Linear regression plots of $ASO$ with $MR$, $BP$ for
drug-related compounds.} \label{MR and E}
\end{figure}

\noindent
\vspace*{0.3mm}

 For $MR$ and $BP$, equation \eqref{EQN1} provides
the following models.
\begin{eqnarray}\label{EQN12}
&BP = 7.2566(\pm 2.4344)(ASO)+168.5229
(\pm 131.3719),\\
\nonumber
&R^{2}=0.7476
,~~~RMSE=76.1290,~~~F=35.5432
,~~~SF=6.59 \times 10^{-5},
\end{eqnarray}
\begin{eqnarray}\label{EQN13}
&MR = 1.2886(\pm 0.3217)(ASO)+20.6520
(\pm 17.3624),\\\nonumber
&R^{2}=0.8425
,~~~RMSE=10.0614,~~~F=64.1693
,~~~SF=3.71 \times 10^{-6}.
\end{eqnarray}

The models~\eqref{EQN12} and \eqref{EQN13} indicate that the $ASO$ index accounts for 75\% of the variation in $BP$ and 84\% of the variation in $MR$. These relationships are illustrated in Figure~\ref{MR and E}. Other evaluated parameters also support the conclusion that the $ASO$ index effectively predicts both $BP$ and $MR$.  A closer comparison between $BP$ and $MR$ suggests that the index predicts $MR$ more accurately. In Figure~\ref{MR and E}, the pink circles for $MR$ lie closer to the orange regression line than those for $BP$, and the associated $RMSE$ for $MR$ is lower while the $F$ value is higher. Therefore, the $ASO$ index demonstrates better predictive performance for $MR$ than for $BP$.

\vspace{2mm}

\subsection*{Comparative Analysis}

This subsection compares the $ASO$ index with several of its variants, including $SO$ (Sombor index), $ESO$ (elliptic Sombor index), and $EU$ (Euler Sombor index). For this comparison, the indices are correlated with selected physicochemical properties of BHCs and molecular graphs that are relevant to drug design.

The absolute correlation coefficients of $ASO$, $SO$, $ESO$, and $EU$ with the physicochemical properties $BP$ and $E_\pi$ for a dataset of BHCs are displayed in Figures~\ref{fig:DHVAP33.png} and \ref{fig:HVAP33.png}. Similarly, the absolute correlation coefficients between these indices and the physicochemical properties $BP$ and $MR$ for a dataset of molecular graphs associated with drug design are shown in Figures~\ref{fig:DHVAP44.png} and \ref{fig:HVAP44.png}.

For the set of BHCs, considering both $BP$ and $E_\pi$, it is observed from Figures~\ref{fig:DHVAP33.png} and \ref{fig:HVAP33.png} that the absolute correlation coefficients of the studied topological indices follow the order
\[
ASO > SO > EU > ESO.
\]
In contrast, for the selected molecular graphs related to drug design, considering both $BP$ and $MR$, it is observed from Figures~\ref{fig:DHVAP44.png} and \ref{fig:HVAP44.png} that the corresponding order of correlation strength is
\[
ASO> EU > SO> ESO.
\]

These observations indicate that the $ASO$ index exhibits the strongest correlation, suggesting that it may more effectively reflect the structural characteristics associated with the physicochemical properties of both BHCs and drug-relevant molecular graphs.

\vspace*{0.9mm}

\begin{figure}[H]
\includegraphics[width=8cm]{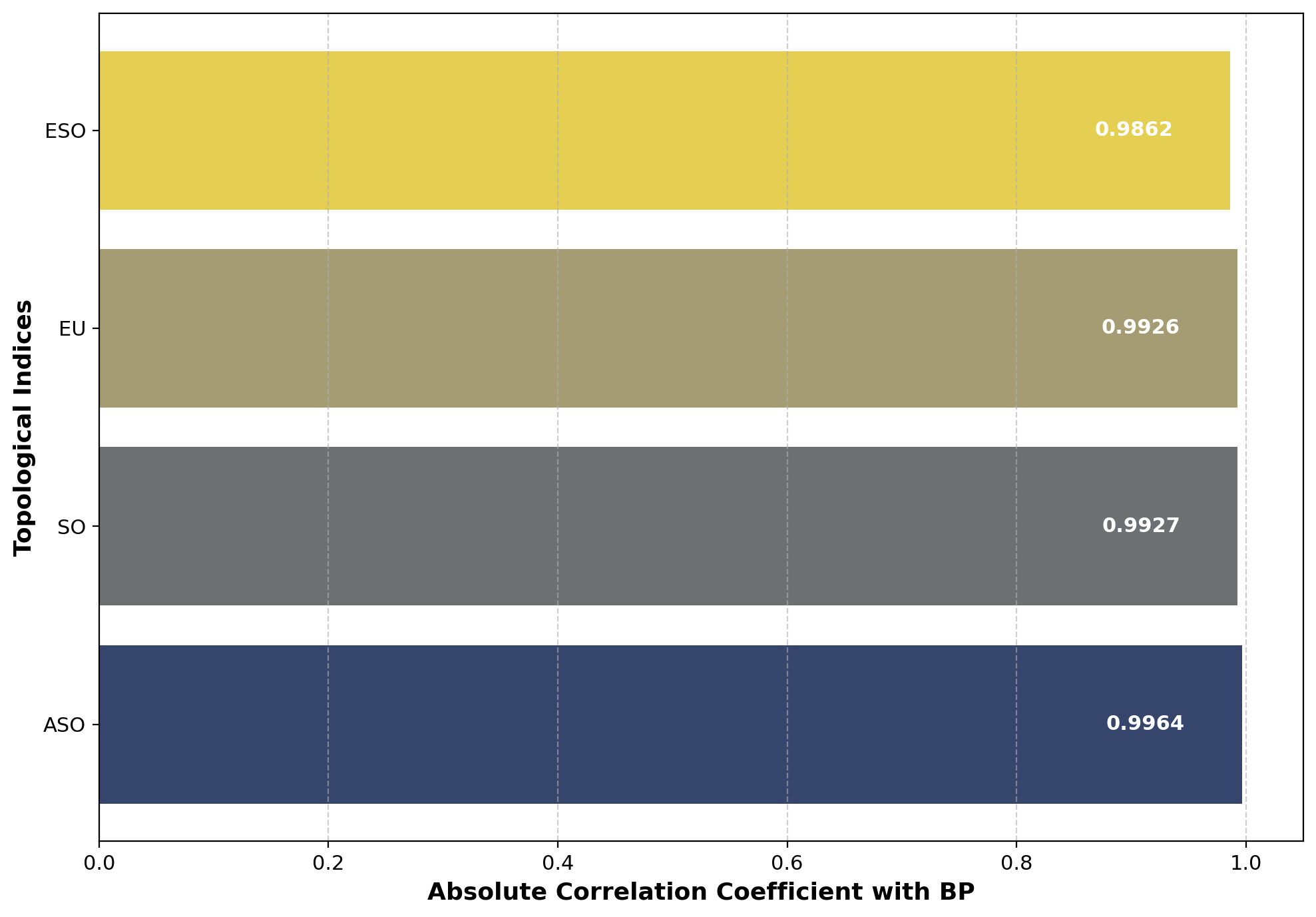}
\centering \caption{Absolute correlation plot: $BP$ versus selected topological indices.} \label{fig:DHVAP33.png}
\end{figure}
\vspace*{2mm}
\begin{figure}[H]
\includegraphics[width=8cm]{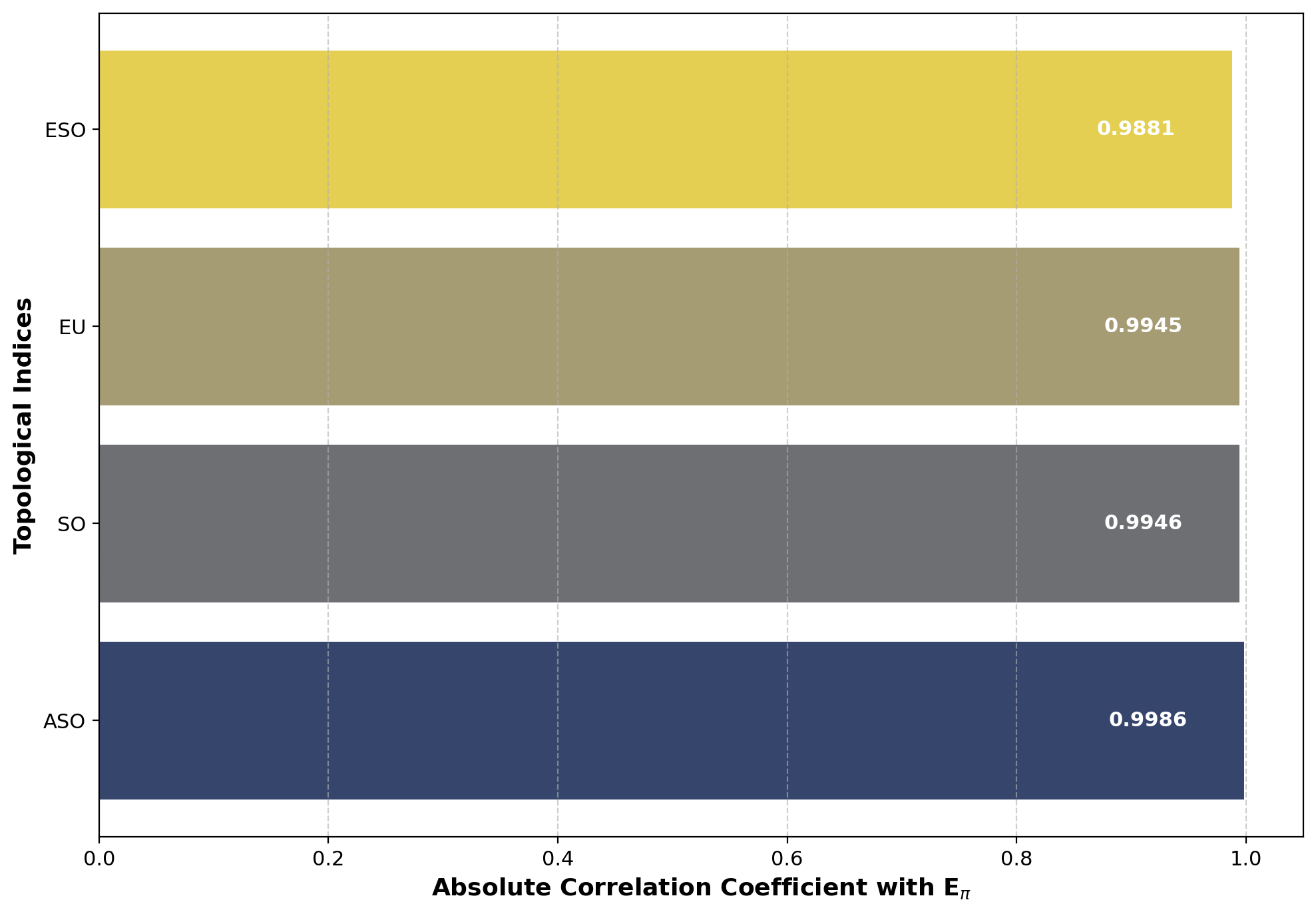}
\centering \caption{Absolute correlation plot: $E_\pi$ versus selected topological indices.}\label{fig:HVAP33.png}
\end{figure}
\vspace*{3mm}
\begin{figure}[H]
\includegraphics[width=8cm]{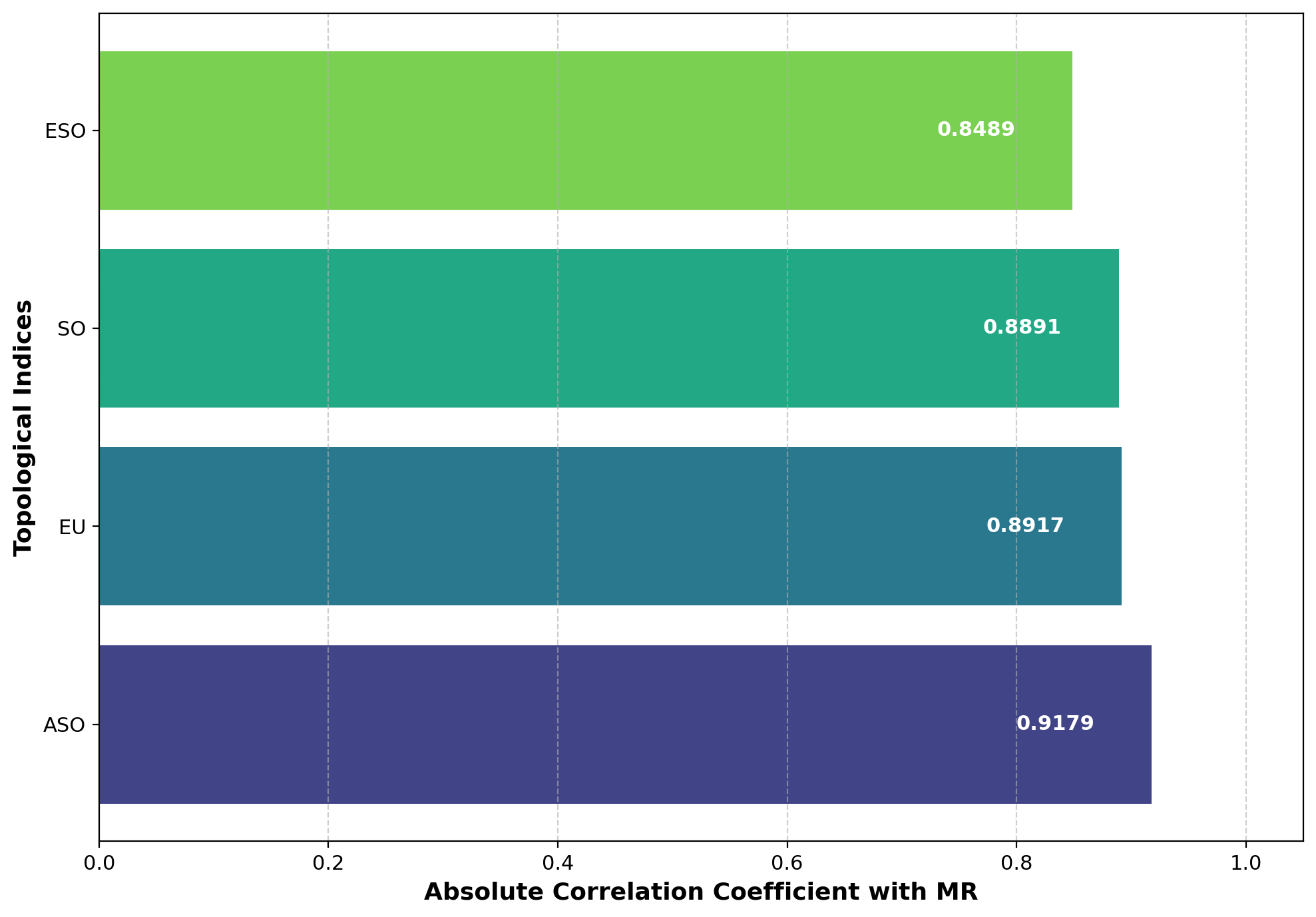}
\centering \caption{Absolute correlation plot: $MR$ versus selected topological indices.} \label{fig:DHVAP44.png}
\end{figure}
\vspace*{3mm}
\begin{figure}[H]
\includegraphics[width=8cm]{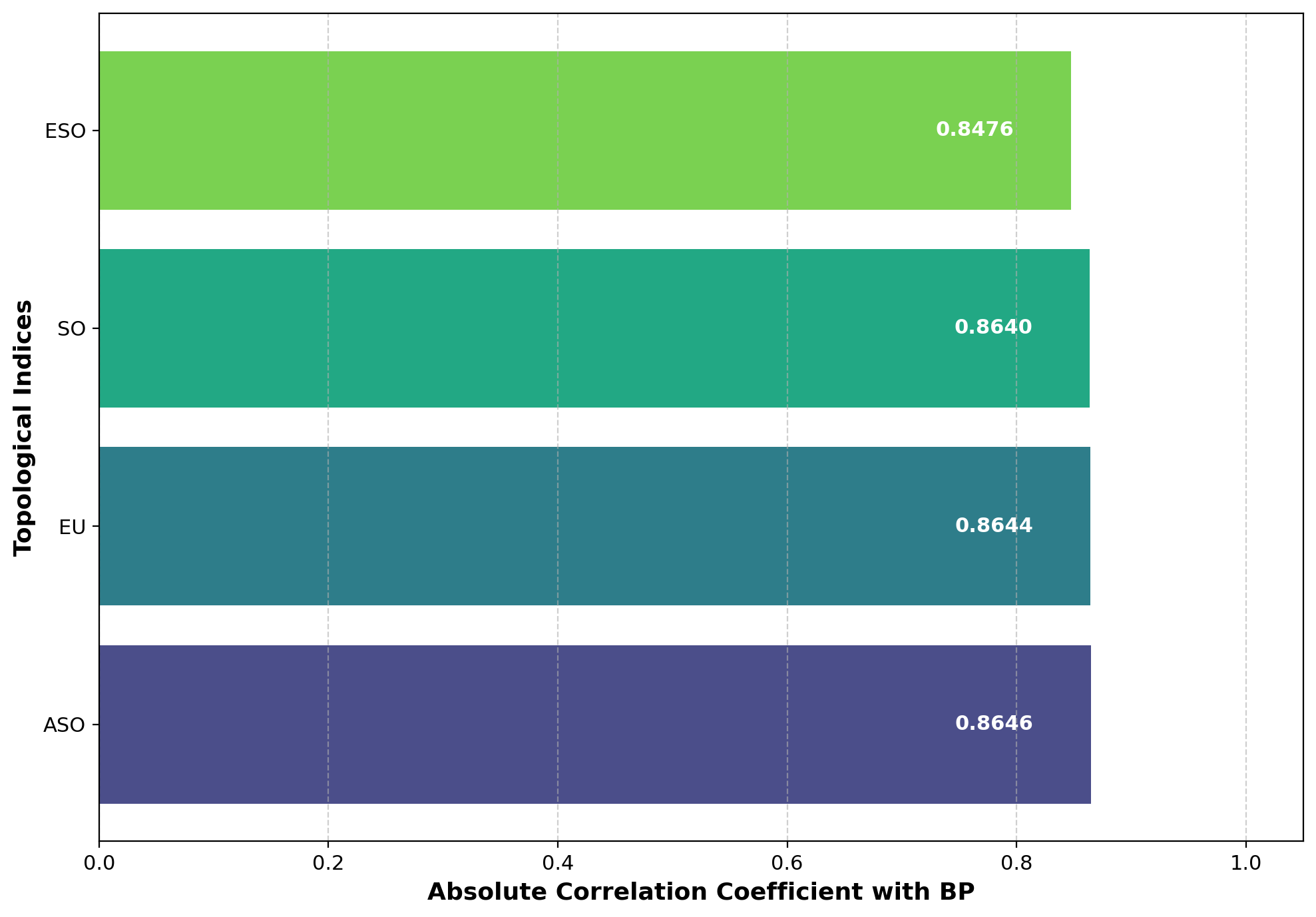}
\centering \caption{Absolute correlation plot: $BP$ versus selected topological indices.}\label{fig:HVAP44.png}
\end{figure}

\section{Concluding Remarks and Future Work}\label{section5}

In this paper, we investigated the augmented Sombor index ($ASO$) from both theoretical and chemical perspectives. Several sharp bounds for the $ASO$ index were established over important graph classes, and the corresponding extremal structures were completely characterized. In particular, we identified the minimum value of the $ASO$ index for unicyclic graphs with a given girth and determined the unique graphs attaining this bound. Moreover, the second maximum $ASO$ value among trees was obtained, together with a characterization of the extremal tree. We also derived the minimum and maximum values of $ASO$ for bipartite graphs and chemical graphs and identified all extremal graphs achieving these values. In addition, maximal chemical trees with respect to the $ASO$ index were characterized.
From the chemical viewpoint, the applicability of the $ASO$ index was examined via quantitative structure–property relationship (QSPR) analysis. The results indicate that the $ASO$ index shows strong predictive ability for relevant physicochemical properties, confirming its effectiveness as a molecular descriptor.

Motivated by the extremal results obtained in this work, we further propose a conjecture concerning the maximum value of the $ASO$ index over multipartite graphs. Recall that the Turán graph $T_n(k)$ is the complete $k$-partite graph of order $n$ whose partition sets differ in size by at most one. We conjecture that $T_n(k)$ attains the maximum $ASO$ value among all $k$-partite graphs of order $n$. If this conjecture holds, then the conclusion of Theorem~\ref{theorem5.2} follows as a special case.

\begin{conjecture}
For any $k$-partite graph $\Omega$ of order $n$,
\begin{align}
ASO(\Omega)&\leq r(k-r)\Big\lceil\frac{n}{k}\Big\rceil\Big\lfloor\frac{n}{k}\Big\rfloor\sqrt{\frac{\left(n-\lceil\frac{n}{k}\rceil\right)^2+\left(n-\lfloor\frac{n}{k}\rfloor\right)^2}{n-2}}+\binom{r}{2}\Big\lceil\frac{n}{k}\Big\rceil^2\frac{n-\lceil\frac{n}{k}\rceil}{\sqrt{n-\lceil\frac{n}{k}\rceil-1}}\nonumber\\[2mm]
&~~~~+\binom{k-r}{2}\Big\lfloor\frac{n}{k}\Big\rfloor^2\frac{n-\lfloor\frac{n}{k}\rfloor}{n-\lfloor\frac{n}{k}\rfloor-1}\nonumber
\end{align}
with equality if and only if $G\cong T_n(k)$.
\end{conjecture}
Future research may extend the study of the augmented Sombor index to other graph classes and explore further extremal problems and chemical applications.

\end{document}